  \DeclareMathOperator{\des}{{\rm des}}
  \DeclareMathOperator{\Hilb}{{\rm Hilb}}
  \def\RR{{\mathbb R}}
  \def\fC{{\mathfrak C}}
  \def\fL{{\mathfrak L}}
  \def\Rat{{\mathrm R}}
  \def\diag{{\mathrm diag}}
  \def\CC{{\mathbb C}}
  \def\NN{{\mathbb N}}
  \def\PP{{\mathbb P}}
  \def\Bas{{\mathcal B}}
  \def\aa{{\mathfrak a}}
  \def\ee{{\mathfrak e}}
  \def\uu{{\mathfrak u}}
  \def\vv{{\mathfrak v}}
  \def\supp{{\mathrm supp}}
  \theoremstyle{plain}
    \newtheorem{theorem}{Theorem}[section]
    \newtheorem{proposition}[theorem]{Proposition}
    \newtheorem{lemma}[theorem]{Lemma}
    \newtheorem{corollary}[theorem]{Corollary}
  \theoremstyle{definition}
    \newtheorem{remark}[theorem]{Remark}
  \numberwithin{equation}{section}
\begin{document}

  \title[Formal Power Series and Algebras]{The Veronese Construction for Formal Power Series and Graded Algebras}

  \author{Francesco Brenti}
  \address{Dipartimento di Matematica \\
           Universita' di Roma "Tor Vergata" \\
           Via della Ricerca Scientifica, 1 \\
           00133, Roma, Italy}

  \email{brenti@mat.uniroma2.it}

  \author{Volkmar Welker}
  \address{Fachbereich Mathematik und Informatik\\
           Philipps-Universit\"at Marburg\\
           35032 Marburg, Germany}
  \email{welker@mathematik.uni-marburg.de}

  \thanks{Both authors were partially supported by Ateneo Italo-Tedesco and DAAD through a Vigoni project.}

  \keywords{Veronese algebra, Hilbert series, $h$-vector, real-rootedness, unimodality, edgewise subdivision}

  \subjclass{}

  \begin{abstract}
    Let $(a_n)_{n \geq 0}$ be a sequence of complex numbers such that
    its generating series satisfies $\sum_{n \geq 0} a_nt^n = \frac{h(t)}{(1-t)^d}$ for some polynomial $h(t)$.
    For any $r \geq 1$ we study the transformation of the coefficient series
    of $h(t)$ to that of $h^{\langle r \rangle}(t)$ where $\sum_{n \geq 0} a_{nr} t^n =
    \frac{h^{\langle r \rangle}(t)}{(1-t)^d}$. We give a precise description of
    this transformation and show that under some natural mild hypotheses the roots
    of $h^{\langle r \rangle}(t)$ converge when $r$ goes to infinity. In particular,
    this holds if $\sum_{n \geq 0} a_n t^n$ is the Hilbert series of a standard
    graded $k$-algebra $A$. If in addition $A$ is Cohen-Macaulay then the coefficients
    of $h^{\langle r \rangle}(t)$ are monotonely increasing with $r$.
    If $A$ is the Stanley-Reisner ring of
    a simplicial complex $\Delta$ then this relates to the $r$th edgewise subdivision
    of $\Delta$ which in turn allows some corollaries on the behavior of the
    respective $f$-vectors. 
  \end{abstract}

  \maketitle

  \section{Introduction and Statement of Results} \label{introduction}

    In this paper we study for a rational formal power series of the form
    $$f(t) := \displaystyle{\sum_{n \geq 0} a_n t^n = \frac{h(t)}{(1-t)^{d}}},~ a_n \in \CC \mathrm{~for~} n \geq 0,$$
 the transformation of the numerator polynomial when passing for
    some number $r \geq 1$ to the generating function 
    $f^{\langle r \rangle}(t) := \sum_{n \geq 0} a_{rn} t^n = \frac{h^{\langle r \rangle}(t)}{(1-t)^{d}}$.

    We are motivated by the following facts from commutative algebra.
    Let $A = \bigoplus_{n \geq 0} A_n$ be a standard graded $k$-algebra; that is
    $A$ is finitely generated in degree $1$ and $A_0 = k$. 
    The Hilbert-Serre Theorem \cite[see Chap. 10.4]{Eisenbud} asserts that its Hilbert-series 
    $\Hilb(A,t) = \sum_{n \geq 0} \dim_k A_n \,  t^n$
    is a rational function of the form $\Hilb(A,t) = \frac{h(t)}{(1-t)^{d}}$ for
    some polynomial $h(t)$ such that $h(1) \geq 1$ and $d$ the Krull dimension of $A$.
    The $r$th Veronese algebra of $A$ is the standard graded $k$-algebra 
    $A^{\langle r \rangle} := \bigoplus_{n \geq 0} A_{nr}$ with Hilbert-series
    $\Hilb(A^{\langle r \rangle},t) = \sum_{n \geq 0} \dim_k A_{rn} t^n$ $=$ Hilb$(A,t)^{\langle r \rangle}$.
    Veronese algebras are well studied objects in commutative algebra and algebraic geometry.
    In particular, the limiting behavior of algebraic properties of 
    $A^{\langle r \rangle}$ for large $r$ has been considered in \cite{Backelin}, \cite{EisenbudReevesTotaro} and more
    generally in \cite{ConcaHerzogTrungValla}. Indeed as will be seen
    later the results from \cite{Backelin} and \cite{EisenbudReevesTotaro} relate to our own results.

    Most results presented in this paper are very much in the spirit of results from \cite{BrentiWelker} 
    on the behavior of the $h$-vector and $h$-polynomial of barycentrically subdivided simplicial complexes.
    But even though the formulations of the theorems appear to be very similar the proofs are almost disjoint
    except for the use of Lemma \ref{technicallemma} in the proof of Theorem \ref{limittheorem}. 

    Before we can state the first main result we have to define the following numbers.
    Here and in the sequel $\NN$ will denote the nonnegative integers and $\PP$ the
    strictly positive integers. 
    For $d,r,i \in \NN$ let
    \[ 
       C(r,d,i) := \Big| \big\{~ (a_1, \ldots , a_d) \in \NN^d ~\big|~
                                            \begin{array}{c}  a_1 + \cdots + a_d = i \\
                                                              a_j \leq r {\mathrm ~for~} 1 \leq j \leq d
                                            \end{array} \big\} 
       \Big|,
    \]
    for $d \geq 1$ and $C(r,0,i) = \delta_{0,i}$.
    Note that it is easy to see that
    \[
       C(r,d,i) = \sum_{\{ \lambda  \subseteq  (r^d): \; |\lambda | =i \}}
                       \left( ^{^{\scriptstyle \hspace{1,6cm} d}}_{_{\scriptstyle m_{1} (\lambda ), \ldots ,
                       m_{r}(\lambda ),d-l(\lambda )}} \right)   ,
    \]
where, for a partition $\lambda$, $\ell (\lambda )$ denotes the number of parts of $\lambda$
and $m_{i}(\lambda )$ the number of parts of $\lambda$ that are equal to $i$.

    \begin{theorem} \label{positivelinear}
      Let $(a_n)_{n \geq 0}$ be a sequence of complex numbers such that
      for some $s,d \geq 0$ its generating series $f(t) := \sum_{n \geq 0} a_nt^n$ satisfies
      $$f(t) = \frac{h_0 + \cdots + h_st^s}{(1-t)^d}.$$
      Then for any $r \in \PP$ we have
      $$f^{\langle r \rangle}(t)  = \sum_{n \geq 0} a_{nr} t^n = \frac{h_0^{\langle r \rangle} +
      \cdots + h_m^{\langle r \rangle}t^m}{(1-t)^{d}},$$
     where m:=max(s,d) and
        $$h_i^{\langle r \rangle} = \sum_{j = 0}^s C(r-1,d,ir-j)\, h_j ,$$
	for $i=0, \ldots ,m$.
    \end{theorem}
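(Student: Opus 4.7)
The plan is to rewrite the denominator $(1-t)^d$ so that it naturally accommodates extraction of the $r$-divisible subseries, then read off the numerator.

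First I would use the factorization $1-t^r = (1-t)(1+t+\cdots+t^{r-1})$ to obtain
$$f(t) \;=\; \frac{h(t)}{(1-t)^d} \;=\; \frac{h(t)\,(1+t+\cdots+t^{r-1})^d}{(1-t^r)^d}.$$
The point is that $(1-t^r)^d$ expands as $\sum_{\ell\geq 0}\binom{\ell+d-1}{d-1}t^{r\ell}$, which only contributes to powers of $t$ divisible by $r$. So the $r$-Veronese operation becomes an extraction from the \emph{numerator} alone.

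Next I would identify the coefficients of the polynomial $(1+t+\cdots+t^{r-1})^d$. A direct combinatorial expansion gives
$$(1+t+\cdots+t^{r-1})^d \;=\; \sum_{(a_1,\dots,a_d)\in\{0,\dots,r-1\}^d} t^{a_1+\cdots+a_d} \;=\; \sum_{i\geq 0} C(r-1,d,i)\,t^i,$$
by the very definition of $C(r-1,d,i)$. Hence, writing $h(t)(1+t+\cdots+t^{r-1})^d = \sum_k b_k t^k$, convolution gives
$$b_k \;=\; \sum_{j=0}^s h_j\, C(r-1,d,k-j).$$

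The third step is the Veronese extraction. Since $f(t) = \bigl(\sum_k b_k t^k\bigr)\bigl(\sum_{\ell\geq 0}\binom{\ell+d-1}{d-1}t^{r\ell}\bigr)$, the coefficient of $t^{rn}$ in $f(t)$ is
$$a_{rn} \;=\; \sum_{p=0}^{n} b_{rp}\,\binom{n-p+d-1}{d-1}.$$
Substituting into $\sum_n a_{rn}t^n$ and recognizing the inner sum as a Cauchy product with $\frac{1}{(1-t)^d}$, I obtain
$$f^{\langle r\rangle}(t) \;=\; \frac{\sum_{p\geq 0} b_{rp}\,t^p}{(1-t)^d}.$$
Setting $h_i^{\langle r\rangle} := b_{ri} = \sum_{j=0}^s C(r-1,d,ir-j)\,h_j$ yields the formula stated in the theorem.

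The only subtle point, and the one that will need a short verification, is the degree bound $m=\max(s,d)$. Since $h(t)(1+t+\cdots+t^{r-1})^d$ has degree $s+d(r-1)$, we have $b_{rp}=0$ once $rp>s+d(r-1)$, i.e.\ $p>d+(s-d)/r$. When $s\leq d$ this forces $p\leq d=m$, and when $s>d$ one checks $d+(s-d)/r\leq s=m$ (with equality at $r=1$). So $h_i^{\langle r\rangle}=0$ for $i>m$, confirming the stated degree and completing the proof.
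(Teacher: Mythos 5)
Your proof is correct, and it takes a cleaner, more elementary route than the paper's. Both arguments pivot on the same factorization $\frac{1}{(1-t)^d}=\frac{(1+t+\cdots+t^{r-1})^d}{(1-t^r)^d}$ and on reading off $(1+t+\cdots+t^{r-1})^d=\sum_i C(r-1,d,i)t^i$ directly from the definition of $C$. Where you differ is in how the ``keep every $r$th coefficient'' step is carried out: the paper runs the classical roots-of-unity filter, writing $\sum_n a_{rn}t^{rn}=\frac{1}{r}\sum_{\ell=0}^{r-1}f(\rho^\ell t)$ for a primitive $r$th root of unity $\rho$ and then untangling the resulting double sums, whereas you simply observe that the series $\frac{1}{(1-t^r)^d}=\sum_{\ell\geq 0}\binom{\ell+d-1}{d-1}t^{r\ell}$ is supported on exponents divisible by $r$, so the coefficient of $t^{rn}$ in $f$ is a convolution of the $b_{rp}$ against $\frac{1}{(1-t)^d}$ after the substitution $t^r\mapsto t$. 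This avoids complex numbers entirely and makes the appearance of $b_{ri}$ as $h_i^{\langle r\rangle}$ transparent. Your degree bound at the end is the same computation the paper performs (the paper checks $ir-j>d(r-1)$ for $i>m$; you check $\deg\bigl(h(t)(1+\cdots+t^{r-1})^d\bigr)=s+d(r-1)<rp$ for $p>m$), just packaged differently, and it is carried out correctly in both cases $s\leq d$ and $s>d$. One cosmetic slip: you write that ``$(1-t^r)^d$ expands as $\sum_{\ell\geq 0}\binom{\ell+d-1}{d-1}t^{r\ell}$'' when you mean $(1-t^r)^{-d}$; your subsequent use is the correct one, so this is only a typo to fix.
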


    The following is a simple reformulation of Theorem \ref{positivelinear}
    in the case of Hilbert-series of standard graded $k$-algebras.

    \begin{corollary} \label{algebrapositivelinear}
       Let $A$ be a standard graded $k$-algebra of dimension $d$
       with Hilbert-series
       $$\Hilb(A,t) = \frac{h_0 + \cdots + h_st^s}{(1-t)^{d}}.$$
       Then for any $r \in \PP$ we have
       $$\Hilb(A^{\langle r \rangle},t) = \frac{h_0^{\langle r \rangle} +
       \cdots + h_m^{\langle r \rangle}t^m}{(1-t)^{d}},$$
       where m:=max(s,d) and
        $$h_i^{\langle r \rangle} = \sum_{j = 0}^s C(r-1,d,ir-j)\, h_j ,$$
	for $i= 0, \ldots , m$.
    \end{corollary}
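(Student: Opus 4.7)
The plan is to observe that Corollary \ref{algebrapositivelinear} is a direct specialization of Theorem \ref{positivelinear} to the case where the coefficient sequence is the sequence of dimensions of the graded components of a standard graded $k$-algebra. So the corollary should follow by simply unwinding the definitions and invoking the theorem, with essentially no additional content.

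Concretely, first I would set $a_n := \dim_k A_n$ for $n \geq 0$, so that $f(t) = \sum_{n \geq 0} a_n t^n = \Hilb(A,t)$. By the Hilbert--Serre theorem cited in the introduction, this generating series has the rational form $\frac{h_0 + \cdots + h_s t^s}{(1-t)^d}$ appearing in the hypothesis of Theorem \ref{positivelinear}, with $d$ the Krull dimension of $A$ (matching the dimension hypothesis of the corollary). Next I would identify the Hilbert series of the Veronese subalgebra with the $\langle r \rangle$-transform of $f$: by the very definition $A^{\langle r \rangle} = \bigoplus_{n \geq 0} A_{nr}$ given in the introduction, one has
\[
\Hilb(A^{\langle r \rangle}, t) \;=\; \sum_{n \geq 0} \dim_k A_{rn}\, t^n \;=\; \sum_{n \geq 0} a_{rn}\, t^n \;=\; f^{\langle r \rangle}(t).
\]

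With these two identifications in place, the stated formula for the numerator $h_0^{\langle r \rangle} + \cdots + h_m^{\langle r \rangle} t^m$ and the formula for each $h_i^{\langle r \rangle}$ in terms of the constants $C(r-1,d,ir-j)$ follow by quoting Theorem \ref{positivelinear} verbatim. There is no genuine obstacle here; the only thing to verify is that the hypotheses of the theorem are met, which is precisely the content of Hilbert--Serre together with the definition of the Veronese construction. Accordingly I would present the argument as a two- or three-line proof rather than as a separate extended derivation.
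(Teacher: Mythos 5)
Your proposal is correct and matches the paper exactly: the paper presents this corollary as "a simple reformulation of Theorem \ref{positivelinear} in the case of Hilbert-series of standard graded $k$-algebras," with no further argument, which is precisely your specialization $a_n = \dim_k A_n$ together with $\Hilb(A^{\langle r\rangle},t) = f^{\langle r\rangle}(t)$.
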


    As a consequence it follows that the first $d+1$ entries of the
     $h$-vector $h(A) = (h_0, \ldots, h_s)$
    of a standard graded algebra grow weakly when taking Veronese subalgebras in
    case $h_i \geq 0$ for $0 \leq i \leq s$. Note that this condition for example
    is satisfied if $A$ is Cohen-Macaulay (see for example \cite[Proposition 4.3.1]{BrunsHerzog}).

    \begin{corollary} \label{growth}
       Let $A$ be a standard graded $k$-algebra of dimension $d$
       with Hilbert-series
       $$\Hilb(A,t) = \frac{h_0 + \cdots + h_st^s}{(1-t)^{d}}$$
       such that $h_i \geq 0$ for $i=0,\ldots,s$.
       Then for any $r \geq 1$ the Hilbert-series
       $$\Hilb(A^{\langle r \rangle},t) = \frac{h_0^{\langle r \rangle} +
       \cdots + h_m^{\langle r \rangle}t^m}{(1-t)^d}$$ of the $r$th Veronese algebra of $A$
       satisfies
       $h_i^{\langle r \rangle} \geq h_i$ for $0 \leq i \leq d$.
       Moreover, if $r \geq d$ then $h_i^{\langle r \rangle} > h_i$
       for $1 \leq i \leq d-1$. 
       In particular, all conclusions hold if $A$ is Cohen-Macaulay.
    \end{corollary}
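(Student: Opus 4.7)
The plan is to invoke Corollary~\ref{algebrapositivelinear} directly and to exploit that, under the hypothesis $h_j \geq 0$ for all $j$, every summand in the resulting formula is nonnegative. Writing
$$h_i^{\langle r \rangle} = \sum_{j=0}^s C(r-1,d,ir-j)\, h_j$$
and recalling that $h_0 = \dim_k A_0 = 1$ since $A$ is standard graded, the comparison between $h_i^{\langle r \rangle}$ and $h_i$ will be driven by two distinguished terms: the $j = i$ term, with coefficient $C(r-1,d,i(r-1))$, and the $j = 0$ term, with coefficient $C(r-1,d,ir)$.

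For the weak inequality it suffices to show $C(r-1,d,i(r-1)) \geq 1$ whenever $0 \leq i \leq d$. This is immediate from the combinatorial definition of $C$: the tuple in $\NN^d$ with $i$ entries equal to $r-1$ and the remaining $d-i$ entries equal to $0$ has all entries bounded by $r-1$ and sums to $i(r-1)$. Since every term in the above sum is nonnegative by hypothesis, this yields
$$h_i^{\langle r \rangle} \;\geq\; C(r-1,d,i(r-1))\, h_i \;\geq\; h_i,$$
as claimed.

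To obtain the strict inequality when $r \geq d$ and $1 \leq i \leq d-1$, I will additionally retain the $j = 0$ contribution, which amounts to showing $C(r-1,d,ir) \geq 1$. Since any admissible tuple has sum at most $d(r-1)$, this requires $ir \leq d(r-1)$, equivalently $r \geq d/(d-i)$; over $1 \leq i \leq d-1$ the worst case is $i = d-1$, reading precisely $r \geq d$. Combining both contributions,
$$h_i^{\langle r \rangle} \;\geq\; C(r-1,d,ir)\, h_0 \,+\, C(r-1,d,i(r-1))\, h_i \;\geq\; 1 + h_i \;>\; h_i.$$
No step is genuinely difficult; the one substantive conceptual point is recognizing that the hypothesis $r \geq d$ is sharp, being exactly the threshold at which the $j = 0$ term survives in the extreme range $i = d-1$. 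The Cohen--Macaulay addendum is then immediate, as Cohen--Macaulayness forces $h_i \geq 0$ for all $i$.
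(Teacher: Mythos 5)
Your proposal is correct and follows essentially the same route as the paper: it isolates the $j=i$ term with the explicit tuple of $i$ copies of $r-1$ and $d-i$ zeros to get $h_i^{\langle r\rangle}\ge h_i$, and then the $j=0$ term together with $h_0=1$ and the inequality $ir\le d(r-1)$ for $r\ge d$ to get strictness. The only cosmetic point is that you phrase $ir\le d(r-1)$ as a \emph{necessary} condition for $C(r-1,d,ir)\ge 1$ and then use it as sufficient; sufficiency is the standard (and easy) fact that any integer between $0$ and $d(r-1)$ is a sum of $d$ parts each at most $r-1$, which the paper likewise asserts without further argument.
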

    \begin{proof}
      By Corollary \ref{algebrapositivelinear} it follows that 
      $h_i^{\langle r \rangle} = \sum_{j = 0}^s C(r-1,d,ir-j)\, h_j$. Clearly,
      $C(r-1,d,ir-j) \geq 0$ for all $r,d,i,j$. Moreover, for $0 \leq i \leq d$ we have
      $C(r-1,d,ir-i) \geq 1$ since the sum
      $$i(r-1) = \underbrace{(r-1)+ \cdots + (r-1)}_{i\mathrm{~times}} +
                 \underbrace{0 + \cdots + 0}_{(d-i) \mathrm{~times}}$$ is
      clearly among the sums counted by $C(r-1,d,i(r-1))$.
      This implies $h_i^{\langle r \rangle} \geq h_i$ for $0 \leq i \leq d$. 
      It is well known that for a standard graded algebra we have $h_0 = 1$.
      Now if $r \geq d$ then for $1 \leq i \leq d-1$ we have $ir \leq (d-1)r
      \leq d(r-1)$.
       Thus there is at least one sum representation
      of $ir$ with $d$ summands $\leq r-1$. Hence $C(r-1,d,ir) \geq 1$ and
      therefore $C(r-1,d,ir-0) h_0 \geq 1$ which then implies
      $h_i^{\langle r \rangle} \geq h_i+1$ for $1 \leq i \leq d-1$.
    \end{proof} 
    
    Note that for the Hilbert-series 
    $\Hilb(A,t) = \frac{h_0 + \cdots + h_dt^d}{(1-t)^{d}}$ of a standard
    graded algebra $A$ it is well known that $h_0^{\langle r \rangle} = h_0 = 1$ and
    $h_d^{\langle r \rangle} = h_d$. Of course these identities also follow from
    Theorem \ref{positivelinear}.
 
    \begin{theorem} \label{limittheorem}
       For any $d \geq 2$ there are strictly negative real numbers
       $\alpha_1 \ldots, \alpha_{d-2}$ such that for any
       sequence $(a_n)_{n \geq 0}$ of real numbers such that $a_0 = 1$ and $a_n \geq 0$ for large $n$ whose
       generating series $f(t) = \sum_{n \geq 0} a_nt^n = \frac{h(t)}{(1-t)^d}$
       for some polynomial $h(t)$ with $h(1) \neq 0$
       there is a number $R > 0$ and sequences of complex numbers $(\beta_r^{(i)})_{r \geq 1}$, $1 \leq i \leq d$,
       such that :
       \begin{itemize}
          \item[(i)] $\beta_r^{(i)}$ is real for $r > R$ and $1 \leq i \leq d$ and strictly negative
                     for $r > R$ and $1 \leq i \leq d-1$.
          \item[(ii)] $\beta_r^{(i)} \rightarrow  \alpha_i$ for $r \rightarrow \infty$ and $1 \leq i \leq d-2$.
          \item[(iii)] $\beta_r^{(d-1)} \rightarrow -\infty$ for $r \rightarrow \infty$.
          \item[(iv)] $\beta_r^{(d)} \rightarrow 0$ for $r \rightarrow \infty$.
          \item[(v)] $h_0^{\langle r \rangle} + \cdots + h_d^{\langle r \rangle}t^d =
                       \prod_{i=1}^d (1-\beta_r^{(i)}t)$.
       \end{itemize}
    \end{theorem}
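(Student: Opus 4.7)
The plan is to work with the reverse polynomial $\tilde h^{\langle r \rangle}(u) := u^d h^{\langle r \rangle}(1/u) = \prod_{i=1}^d(u-\beta_r^{(i)})$, whose roots are the $\beta_r^{(i)}$'s themselves, and to identify a suitable rescaling that converges to a multiple of $u\, A_{d-1}(u)$, where $A_{d-1}(t) = \sum_{k=0}^{d-2} A(d-1,k)\, t^k$ is the classical Eulerian polynomial. By Frobenius's theorem, $A_{d-1}$ has $d-2$ simple, strictly negative real roots $\alpha_1, \ldots, \alpha_{d-2}$, depending only on $d$; these will be the constants promised in the theorem.

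Starting from Theorem \ref{positivelinear}, I would derive the asymptotics of $h_i^{\langle r \rangle}$ as $r\to\infty$. The identity $\sum_N C(r-1,d,N)\, x^N = \left(\frac{1-x^r}{1-x}\right)^d$ yields
$$C(r-1,d,N) = \sum_{k=0}^d (-1)^k \binom{d}{k}\binom{N-rk+d-1}{d-1},$$
and substituting $N=ir-j$ with $1\le i\le d-1$ for $r$ large produces
$$C(r-1,d,ir-j) = \frac{r^{d-1}}{(d-1)!}\, A(d-1,i-1) + O(r^{d-2}),$$
the leading coefficient being independent of $j$ and equal to the Eulerian number via the classical Worpitzky-type identity $A(d-1,i-1)=\sum_{k=0}^{i-1}(-1)^k\binom{d}{k}(i-k)^{d-1}$. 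Theorem \ref{positivelinear} then gives
$$h_i^{\langle r \rangle} = \frac{h(1)}{(d-1)!}\, A(d-1,i-1)\, r^{d-1} + O(r^{d-2}), \qquad 1\le i\le d-1,$$
while $h_0^{\langle r \rangle} = 1$ exactly, and the complementation $a_\ell\mapsto r-1-a_\ell$ shows that $h_d^{\langle r \rangle}$ stabilizes to the constant $H_d := \sum_{j\ge d}\binom{j-1}{d-1}\, h_j$ for $r$ sufficiently large. The hypotheses $a_n\ge 0$ for large $n$ together with $h(1)\ne 0$ force $h(1)>0$.

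Now rescale and set $q_r(u) := r^{-(d-1)}\, \tilde h^{\langle r \rangle}(u)$. A coefficient-wise check, using the palindromicity $A(d-1,k)=A(d-1,d-2-k)$ of the Eulerian polynomial, gives $q_r(u) \to \frac{h(1)}{(d-1)!}\, u\, A_{d-1}(u)$. The limit polynomial $u\, A_{d-1}(u)$ has the $d-1$ simple roots $0, \alpha_1,\ldots,\alpha_{d-2}$, so by Lemma \ref{technicallemma} (the continuity-of-roots tool referenced in the introduction), for $r$ sufficiently large $\tilde h^{\langle r \rangle}(u)$ has $d-1$ simple real roots in arbitrarily small neighbourhoods of these points. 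Since $\tilde h^{\langle r \rangle}$ has degree $d$ but the leading coefficient of $q_r$ tends to zero, exactly one further root of $\tilde h^{\langle r \rangle}$ must escape to $\infty$. Labeling accordingly gives $\beta_r^{(i)}\to \alpha_i$ for $1\le i\le d-2$ (statement (ii)), $\beta_r^{(d)}\to 0$ (statement (iv)), and a root $\beta_r^{(d-1)}$ with $|\beta_r^{(d-1)}|\to\infty$.

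For the sign in (iii), the relation $\prod_{i=1}^d \beta_r^{(i)} = (-1)^d H_d$ read off from (v) is a bounded constant; together with $\prod_{i=1}^{d-2}\beta_r^{(i)}\to \prod_i\alpha_i\ne 0$ this shows that $\beta_r^{(d-1)} \beta_r^{(d)}$ has a finite limit. Positivity of the constant and linear coefficients of $h^{\langle r \rangle}$ for large $r$ then forces the root of $h^{\langle r \rangle}$ closest to the origin, namely $1/\beta_r^{(d-1)}$, to be negative, and hence $\beta_r^{(d-1)}\to -\infty$. Realness of all $\beta_r^{(i)}$ for $r>R$ follows from simplicity of the limits and real coefficients of $h^{\langle r \rangle}$. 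The main obstacle I anticipate is the delicate simultaneous handling of the two boundary behaviours in Lemma \ref{technicallemma}: the rescaled polynomial $q_r$ suffers a degree drop at both ends (both the $u^d$ and the constant coefficient of $q_r$ tend to zero), so the continuity-of-roots argument must be set up to track one root escaping to $0$ and another to $\infty$ in a way compatible with (v). The degenerate case $H_d=0$, which includes $\deg h^{\langle r \rangle}<d$, is handled by setting $\beta_r^{(d)}=0$ in the factorization.
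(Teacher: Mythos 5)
Your proposal is correct and reaches the same destination by the same basic strategy as the paper --- identify $r^{d-1}\frac{h(1)}{(d-1)!}A_{d-1}$ as the dominant part of the reversed numerator polynomial, invoke real-rootedness of the Eulerian polynomial, and track roots, with one root escaping to infinity and one collapsing to the origin --- but both intermediate steps are carried out differently. The paper obtains the $r^{d-1}$-asymptotics structurally: after truncating $h$ to degree $\le d$ (Lemma \ref{cutdegree}), it expands $f$ in the basis $\Bas_d^2$ of Eulerian numerators, in which the Veronese operator $\Phi_r$ is the diagonal matrix $\diag(1,1,r,\ldots,r^{d-1})$ (Lemmas \ref{diagonal} and \ref{eigenvecs}); the coefficient of the top eigenvector is $h(1)/(d-1)!$, which isolates the dominant term at once. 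You instead extract the same asymptotics by direct computation from the formula of Theorem \ref{positivelinear}, via the inclusion--exclusion expression for $C(r-1,d,ir-j)$ and Worpitzky's identity; this is more computational but self-contained, and it makes the constant $h(1)/(d-1)!$ and the stabilization of $h_d^{\langle r\rangle}$ explicit. Second, the paper does not re-derive the root-tracking: it writes the reversed polynomial exactly in the shape $h_d+tg_r(t)+r^{d-1}\alpha_d\, t\tilde f(t)+t^d$ required by Lemma \ref{technicallemma} (Lemma 4.9 of \cite{BrentiWelker}) and cites that lemma for (i)--(iv), whereas you redo the continuity-of-roots analysis by hand. Your worry about a ``double degree drop'' is unfounded: the constant term of $q_r$ tending to $0$ merely means the limit polynomial has a simple root at the origin, which ordinary root continuity handles; only the vanishing leading coefficient needs the escaping-root argument. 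The one place your write-up should be tightened is the sign in (iii): rather than arguing from ``the root closest to the origin,'' note that $\sum_{i}\beta_r^{(i)}=-h_1^{\langle r\rangle}\to-\infty$ while all $\beta_r^{(i)}$ with $i\ne d-1$ stay bounded, which forces $\beta_r^{(d-1)}\to-\infty$; this is precisely where the hypothesis that $a_n\ge 0$ for large $n$ (hence $h(1)>0$) enters, as you correctly observed.
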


   \begin{corollary} \label{algebralimitI}
      For any $d \geq 2$ there are strictly negative real numbers
      $\alpha_1 \ldots, \alpha_{d-2}$ such that for any
      standard graded $k$-algebra of dimension $d$
      with Hilbert-series
      $$\Hilb(A,t) = \frac{h_0 + \cdots + h_st^s}{(1-t)^{d}}$$
      there are $R > 0$ and sequences of complex numbers $(\beta_r^{(i)})_{r \geq 1}$, $1 \leq i \leq d$,
      such that~:
      \begin{itemize}
         \item[(i)] $\beta_r^{(i)}$ is real for $r > R$ and $1 \leq i \leq d$ and strictly negative
                    for $r > R$ and $1 \leq i \leq d-1$.
         \item[(ii)] $\beta_r^{(i)} \rightarrow  \alpha_i$ for $r \rightarrow \infty$ and $1 \leq i \leq d-2$.
         \item[(iii)] $\beta_r^{(d-1)} \rightarrow -\infty$ for $r \rightarrow \infty$.
         \item[(iv)] $\beta_r^{(d)} \rightarrow 0$ for $r \rightarrow \infty$.
         \item[(v)] $h_0^{\langle r \rangle} + \cdots + h_d^{\langle r \rangle}t^d =
                       \prod_{i=1}^d (1-\beta_r^{(i)}t)$, for $r>R$.
       \end{itemize}
    \end{corollary}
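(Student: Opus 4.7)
The plan is to show that Corollary \ref{algebralimitI} is essentially a translation of Theorem \ref{limittheorem} into the language of Hilbert series; the proof reduces to checking that the hypotheses of that theorem are met by the sequence of graded dimensions of a standard graded $k$-algebra.

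First, I would take $a_n := \dim_k A_n$, so that $\Hilb(A,t) = \sum_{n \geq 0} a_n t^n$. There are three conditions in Theorem \ref{limittheorem} to verify. The condition $a_0 = 1$ holds because $A$ is standard graded, hence $A_0 = k$. The condition $a_n \geq 0$ is automatic from the definition of $a_n$ as a $k$-vector space dimension. Finally, the Hilbert--Serre theorem, recalled in the introduction, supplies the rational form $\Hilb(A,t) = \frac{h(t)}{(1-t)^d}$ with $h(1) \geq 1$, so in particular $h(1) \neq 0$. Thus every hypothesis of Theorem \ref{limittheorem} is satisfied.

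Next, I would identify the output of the Veronese construction on Hilbert series with the operator $f \mapsto f^{\langle r \rangle}$ of Theorem \ref{limittheorem}. By definition of $A^{\langle r \rangle}$ we have
$$\Hilb(A^{\langle r \rangle},t) = \sum_{n \geq 0} \dim_k A_{rn}\, t^n = \sum_{n \geq 0} a_{rn}\, t^n = f^{\langle r \rangle}(t),$$
so the numerator polynomial appearing in the corollary coincides with the polynomial $h_0^{\langle r \rangle} + \cdots + h_d^{\langle r \rangle} t^d$ from Theorem \ref{limittheorem}. Applying that theorem directly produces the numbers $\alpha_1, \ldots, \alpha_{d-2}$, the threshold $R$, and the sequences $(\beta_r^{(i)})_{r \geq 1}$ satisfying properties (i)--(v) verbatim.

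I expect no serious obstacle: the statement is really a repackaging of Theorem \ref{limittheorem} and the proof is just a hypothesis-check followed by an invocation. The only point that might deserve a brief comment is that the degree $s$ of the numerator of $\Hilb(A,t)$ may exceed $d$ in general (when $A$ fails to be Cohen--Macaulay), but Theorem \ref{limittheorem} already handles this by asserting a factorization into exactly $d$ linear factors once $r > R$, so no further work is needed.
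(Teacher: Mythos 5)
Your proposal is correct and matches the paper exactly: the authors simply remark that Corollary \ref{algebralimitI} ``is just a reformulation of Theorem \ref{limittheorem} for Hilbert-series,'' so there is nothing to prove. Your explicit verification of the hypotheses ($a_0 = \dim_k A_0 = 1$, nonnegativity of the $a_n$ as dimensions, and $h(1) \geq 1$ from Hilbert--Serre) fills in exactly the routine check the paper leaves implicit.
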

 
    In \cite{Backelin} and \cite{EisenbudReevesTotaro} it is shown that for a standard graded $k$-algebra $A$ and
    $r$  large enough the $r$th Veronese $A^{\langle r \rangle}$ is Koszul (see \cite[p. 450]{Eisenbud}).
    In turn it is known (see for example
    \cite{ReinerWelker}) that this
    property implies that the numerator polynomial of the Hilbert-series has at least one real root. Thus in
    some sense the previous and the following corollary are inspired by this algebraic limiting results. 

    \begin{corollary} \label{algebralimitII} 
       Let $A$ be a standard graded $k$-algebra of dimension $d \geq 1$
       with Hilbert-series
       $$\Hilb(A,t) = \frac{h_0 + \cdots + h_st^s}{(1-t)^{d}}.$$
       Then there is $R>0$ such that for any $r > R$ we have:
       \begin{itemize}
         \item[(i)] $h_i^{\langle r \rangle} \geq 1$, $0 \leq i \leq d-1$,
         \item[(ii)] $h_i^{\langle r \rangle} = 0$ for $i \geq d+1$
         \item[(iii)] $h^{\langle r \rangle}(t)$ has only real zeros.
       \end{itemize}
       In particular, for $r > R$ the sequence $(h_0^{\langle r \rangle}, \ldots, h_d^{\langle r \rangle})$
       is log-concave and unimodal. 
    \end{corollary}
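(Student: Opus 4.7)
The plan is to split off the easy case $d=1$ and reduce the main case $d\geq 2$ to the factorization provided by Corollary \ref{algebralimitI}. For $d=1$ I would apply Theorem \ref{positivelinear} directly: since $C(r-1,1,k)=1$ when $0\leq k\leq r-1$ and vanishes otherwise, the formula for $h_i^{\langle r \rangle}$ collapses, for $r>s$, to $h_0^{\langle r \rangle}=1$, $h_1^{\langle r \rangle}=\sum_{j=1}^s h_j = h(1)-1\geq 0$ (as $h(1)$ is the multiplicity of $A$, which is at least $1$), and $h_i^{\langle r \rangle}=0$ for $i\geq 2$. All three items (i)--(iii) hold, and log-concavity and unimodality of a sequence of length at most two are immediate.

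For $d\geq 2$ I would invoke Corollary \ref{algebralimitI}. Part (v) gives, for $r>R$,
$$h^{\langle r \rangle}(t) = \prod_{i=1}^d \bigl(1 - \beta_r^{(i)} t\bigr),$$
a polynomial of degree at most $d$, giving (ii), with real zeros $1/\beta_r^{(i)}$, giving (iii). For (i), expanding the product expresses $h_k^{\langle r \rangle}$ as the $k$th elementary symmetric polynomial in $-\beta_r^{(1)},\ldots,-\beta_r^{(d)}$. I single out the summands containing $-\beta_r^{(d-1)}$ but not $-\beta_r^{(d)}$, whose total is
$$(-\beta_r^{(d-1)})\cdot e_{k-1}\bigl(-\beta_r^{(1)},\ldots,-\beta_r^{(d-2)}\bigr).$$
The first factor tends to $+\infty$ by Corollary \ref{algebralimitI}(iii), and the second converges to $e_{k-1}(-\alpha_1,\ldots,-\alpha_{d-2})$, which is strictly positive since each $-\alpha_j>0$ and $k-1\leq d-2$. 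Every other summand either involves only the bounded variables $-\beta_r^{(1)},\ldots,-\beta_r^{(d-2)}$, or carries the vanishing factor $-\beta_r^{(d)}$ multiplying a bounded expression. Here the key control point is that the product $\beta_r^{(d-1)}\beta_r^{(d)}$ stays bounded: Theorem \ref{positivelinear} applied at index $d$ shows that $h_d^{\langle r \rangle}=(-1)^d\prod_i \beta_r^{(i)}$ equals the constant $\sum_{j\geq d}\binom{j-1}{d-1}h_j$ once $r$ is large, while $\prod_{j=1}^{d-2}(-\beta_r^{(j)})\to \prod_{j=1}^{d-2}(-\alpha_j)>0$. Hence $h_k^{\langle r \rangle}\to +\infty$ for $1\leq k\leq d-1$, so enlarging $R$ secures $h_k^{\langle r \rangle}\geq 1$; the case $k=0$ is trivial.

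Log-concavity and unimodality then follow from (iii) together with the positivity in (i). Newton's inequality for the real-rooted polynomial $h^{\langle r \rangle}(t)$ yields $(h_k^{\langle r \rangle})^2\geq h_{k-1}^{\langle r \rangle}h_{k+1}^{\langle r \rangle}$ for $1\leq k\leq d-1$ without any sign hypothesis, which is log-concavity. Since $h_0^{\langle r \rangle},\ldots,h_{d-1}^{\langle r \rangle}>0$ by (i), the ratios $h_{k+1}^{\langle r \rangle}/h_k^{\langle r \rangle}$ are well defined for $0\leq k\leq d-1$ and, by log-concavity, form a nonincreasing sequence; the standard argument then identifies a mode (the last index where this ratio still exceeds $1$) and yields unimodality, regardless of the sign of $h_d^{\langle r \rangle}$. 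The main obstacle is the asymptotic analysis in the middle paragraph: one must argue that the divergent term driven by $-\beta_r^{(d-1)}\to+\infty$ genuinely dominates, and is not masked by summands involving the indeterminate product $\beta_r^{(d-1)}\beta_r^{(d)}$. The saving observation is precisely the boundedness of the top coefficient $h_d^{\langle r \rangle}$, which controls this product and lets the hierarchy of convergence rates in Corollary \ref{algebralimitI}(ii)--(iv) close the estimate.
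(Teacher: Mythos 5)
Your proposal is correct, and for the substantive part (i) it takes a genuinely different route from the paper. The easy parts coincide: like the paper, you get (ii) and (iii) (and the $d=1$ case) from the degree reduction of Theorem \ref{positivelinear}/Lemma \ref{cutdegree} together with the factorization in Corollary \ref{algebralimitI}(v). For (i) with $d\geq 2$, however, the paper does not argue via the roots at all: it re-enters the proof of Theorem \ref{limittheorem} and writes the numerator as $h_dt^d+t^dg_r(1/t)+\alpha_d r^{d-1}A_{d-1}(t)+1$, then reads off positivity of the coefficients in degrees $1,\ldots,d-1$ directly from the strict positivity of the coefficients of the Eulerian polynomial $A_{d-1}(t)$ and the estimate $g_r/r^{d-1}\to 0$. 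You instead treat Corollary \ref{algebralimitI} as a black box, expand $h_k^{\langle r\rangle}=e_k(-\beta_r^{(1)},\ldots,-\beta_r^{(d)})$, and show that the term $(-\beta_r^{(d-1)})\,e_{k-1}(-\beta_r^{(1)},\ldots,-\beta_r^{(d-2)})$ diverges to $+\infty$ while everything else stays bounded; the one real danger in this route, the indeterminate product $\beta_r^{(d-1)}\beta_r^{(d)}$, you correctly neutralize by noting that $h_d^{\langle r\rangle}=\prod_i(-\beta_r^{(i)})$ is eventually the constant $\sum_{j\geq d}\binom{j-1}{d-1}h_j$ while $\prod_{j\leq d-2}(-\beta_r^{(j)})$ has a strictly positive limit. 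Your version is self-contained relative to the stated results, avoids relying on objects internal to the proof of Theorem \ref{limittheorem} (in particular on the unstated positivity of the coordinate $\alpha_d$ in the basis $\Bas_d^2$, which the paper's argument implicitly needs), and yields the stronger conclusion $h_k^{\langle r\rangle}\to\infty$ for $1\leq k\leq d-1$; the paper's version is shorter because the coefficientwise positivity is immediate once the decomposition is in hand. Your explicit derivation of log-concavity via Newton's inequalities and of unimodality via the monotone ratio argument, valid even though the sign of $h_d^{\langle r\rangle}$ is not controlled, is also sound and supplies details the paper leaves implicit.
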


  \section{Proof of Theorem \ref{positivelinear}} \label{proofpositivelinear}

     \begin{proof}[Proof of Theorem \ref{positivelinear}]
        Set $h(t) := \sum_{i=0}^s h_i\, t^{i}$
        so that $f(t) = \frac{h(t)}{(1-t)^{d}}$.
        Let $\rho \in \CC$ be a primitive $r$-th root of unity. Then $\rho$ and all its powers
        $\rho^j$ for any $j \in \NN$ such that $j \not \equiv 0 \pmod{r}$ satisfy
        $\sum_{i=0}^{r-1}(\rho^j)^{i}=0$. Hence:
        \begin{eqnarray*}
          \sum_{n\geq 0} a_{rn} t^{rn}             & = & \frac{1}{r} \, \sum_{i=0}^{r-1}f(\rho^i \, t) \\
                                                   & = & 
                            \frac{1}{r} \, \sum_{i=0}^{r-1} \frac{h(\rho^i \, t)}{(1-\rho^i \, t)^d} \\
                                                   & = &  
                            \frac{{\displaystyle \sum_{i=0}^{r-1}} \left( \frac{1-t^r}{1-\rho^i \, t} \right)^d
                                                                        \, h(\rho^i \, t)}{r(1-t^r)^d} \\
                                                   & = &  
                            \frac{{\displaystyle \sum_{i=0}^{r-1}} \left( \frac{1-(\rho^i\, t)^r}{1-\rho^i \, t}
                                                           \right)^{d} \, h(\rho^i \, t)}{r \, (1-t^r)^{d}}
        \end{eqnarray*}
        But
        \begin{eqnarray*}
          \frac{1}{r} \, \sum_{\ell =0}^{r-1} \left( \frac{1-(\rho^{\ell}t)^{r}}{1-\rho
	  ^{\ell}t} \right)^{d} \, h(\rho^{\ell } t)
                                                  & = & \frac{1}{r} \sum_{\ell=0}^{r-1}
                                        \Big( \big(1+\rho^\ell\, t+ \cdots + (\rho^\ell \, t)^{r-1}\big)^{d} h(\rho^\ell \, t)\Big) \\
                                                  & = & \frac{1}{r} \, \sum_{\ell=0}^{r-1} \left( \sum_{i \geq 0}
                                        C(r-1,d,i) (\rho^\ell \, t)^{i} \right) \, h(\rho^\ell \, t) \\
                                                  & = & \frac{1}{r} \sum_{i \geq 0} C(r-1,d,i) \left( \sum_{\ell=0}^{r-1}
                                        (\rho^\ell \, t)^i  \sum_{j=0}^s h_j \, (\rho^\ell \, t)^j \right) \\
                                                  & = & \frac{1}{r} \sum_{j=0}^s  \sum_{i \geq 0}
                                        C(r-1,d,i-j) \, h_j \, \sum_{\ell=0}^{r-1} (\rho^\ell \, t)^i \\
                                                  & = & \sum_{i \geq 0} \left(
                                        \sum_{j = 0}^s C(r-1,d,ir-j)\, h_j \right) t^{ri}
        \end{eqnarray*}
        and the result follows since if $i>s \geq d$ then $ir -j>sr-s \geq d(r-1)$
	for all $0 \leq j \leq s$ while if $i>d>s$ then $ir-j>dr-d$ for all $0\leq j \leq s$.
    \end{proof}

    We now analyze the transformation described in
    Theorem \ref{positivelinear} more closely. 
    Consider the vectorspace $\Rat_{d}$ of all rational functions
    $\frac{h(t)}{(1-t)^{d}}$ for polynomials $h(t)$ of degree $\leq d$.
    We consider two bases of $\Rat_d$. First the basis
    $\Bas_d^1$ consisting of all $\frac{t^i}{(1-t)^{d}}$ for $0 \leq i \leq d$.
    For the second basis we recall the definition of an Eulerian polynomial.
    For a number $i \geq 1$ we define $A_i(t) = \sum_{\sigma \in S_i} t^{\des(\sigma)+1}$,
    where $\des(\sigma)$ is the number of descents of the permutation $\sigma$.
    If we also set $A_0(t) = A_{-1}(t) = 1$ then the set $\Bas_d^2$ consisting of all
    $\frac{A_{i-1}(t)(1-t)^{d-i}}{(1-t)^{d}}$, $0 \leq i \leq d$, is a second
    basis of $\Rat_d$. Using the fact that for $1 \leq i$ one has $A_{i}(1)
    \neq 0$ one easily checks that indeed $\Bas_d^2$
    is a basis of $\Rat_{d}$.

    For a fixed  $r \geq 1$ let $\Phi_r : \Rat_d \rightarrow \Rat_d$ be the map that
    sends $f(t)$ to $f^{\langle r \rangle}  (t)$. Note that in the basis $\Bas_d^2$ the
    map $\Phi_r$ sends $\sum_{n \geq 0} n^{i} t^n = \frac{A_i(t)}{(1-t)^{i+1}}$
    for $0 \leq i < d$ to

    \begin{eqnarray*} 
      \Phi_r(\sum_{n \geq 0} n^{i} t^n) & = & \sum_{n \geq 0} (rn)^{i} t^n  \\
                                      & = & r^{i} \sum_{n \geq 0} n^{i} t^n \\
                                      & = & r^{i} \frac{A_i(t)}{(1-t)^{i+1}}
    \end{eqnarray*}
\noindent
    while $\Phi_r(A_{-1}(t))=A_{-1}(t)$.
    In particular, this confirms that $\Phi_r$ is a map from $\Rat_d$ to $\Rat_d$.
    Moreover, since $\Phi_r$ is easily seen to be linear it follows that
    in the basis $\Bas_d^2$ the map $\Phi_r$ is represented by the $(d+1) \times (d+1)$
    diagonal matrix $\diag(1,1,r,\ldots, r^{d-1})$.
    The preceding arguments and Theorem \ref{positivelinear}
    imply the following lemma.

    \begin{lemma} \label{diagonal}
       Let $\fC_{d,r} = (C(r-1,d,ir-j))_{0 \leq i ,j \leq d}$. Then $\fC_{d,r}$ is
       the matrix representing the linear transformation $\Phi_r$ with respect
       to the basis $\Bas_d^1$. In particular, $\fC_{d,r}$ is diagonizable with eigenvalues
       $1$ of multiplicity two and $r,\ldots, r^{d-1}$ of multiplicity one.
    \end{lemma}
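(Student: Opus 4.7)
The plan is to read off the two assertions of the lemma from work that has already been done. For the matrix identification, I would apply Theorem \ref{positivelinear} to the $j$-th element of $\Bas_d^1$, namely $f(t) = \frac{t^j}{(1-t)^d}$. Here $h(t) = t^j$, so $s = j$, $m = \max(s,d) = d$, and the only nonzero coefficient is $h_j = 1$. The formula of Theorem \ref{positivelinear} then collapses to
$$ \Phi_r\!\left( \frac{t^j}{(1-t)^d} \right) \;=\; \sum_{i=0}^{d} C(r-1,d,ir-j)\, \frac{t^i}{(1-t)^d}, $$
which is precisely the statement that the $(i,j)$-entry of the matrix of $\Phi_r$ in $\Bas_d^1$ is $C(r-1,d,ir-j)$; i.e., this matrix equals $\fC_{d,r}$.

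For the spectral claim, I would invoke the diagonal form of $\Phi_r$ in the basis $\Bas_d^2$ computed in the paragraph preceding the lemma. There it was observed that $\Phi_r$ fixes the basis vector $A_{-1}(t) = 1$ and acts on each basis vector $\frac{A_{i-1}(t)}{(1-t)^i} = \sum_{n \geq 0} n^{i-1}\, t^n$, for $1 \leq i \leq d$, as multiplication by $r^{i-1}$. Hence the matrix of $\Phi_r$ in $\Bas_d^2$ is $\diag(1, 1, r, r^2, \ldots, r^{d-1})$, and by the first part $\fC_{d,r}$ is similar to this diagonal matrix. Diagonalizability, together with the claim about the eigenvalues and their multiplicities, follows at once.

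There is no serious obstacle; the lemma is essentially a repackaging of Theorem \ref{positivelinear} together with the Eulerian-polynomial eigenvector calculation. The only point deserving attention is the double occurrence of the eigenvalue $1$: it arises because both $A_{-1}(t) = 1$ and $A_0(t)/(1-t) = 1/(1-t) = \sum_{n \geq 0} n^0\, t^n$ are fixed by $\Phi_r$ (formally, because $r^0 = 1$). This is immediate from the conventions $A_{-1}(t) = A_0(t) = 1$ set up just before the definition of $\Bas_d^2$, so no extra verification is required.
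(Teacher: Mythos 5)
Your proposal is correct and is essentially the paper's own argument: the paper proves this lemma simply by citing ``the preceding arguments'' (the computation showing $\Phi_r$ is $\diag(1,1,r,\ldots,r^{d-1})$ in the basis $\Bas_d^2$) together with Theorem \ref{positivelinear}, which is exactly what you do. Your explicit evaluation of Theorem \ref{positivelinear} on $t^j/(1-t)^d$ and your remark on the double eigenvalue $1$ just spell out the details the paper leaves implicit.
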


    Indeed we can give a factorization of $\fC_{d,r}$ which also clarifies its
    eigenspaces. Let $\fL_d = (l_{ij})_{0 \leq i,j \leq d}$
    be the $(d+1) \times (d+1)$ matrix with entries $l_{ij}$ defined by
    $A_{i-1}(t) (1-t)^{d-i} = \sum_{j=0}^{d} l_{j,i} t^j$.

    \begin{lemma} \label{eigenvecs}
       For any $d,r \geq 1$ we have
       $$\fC_{d,r} = \fL_d \cdot  \diag(1,1,r,\ldots, r^{d-1}) \cdot \fL_d^{-1}.$$
       The vector $\ell_i = (l_{0i}, \ldots, l_{di})^t$ is an eigenvector
       of $\fC_{d,r}$ for the eigenvalue  $r^{i-1}$ for $1 \leq i \leq d$.
       Moreover:
       \begin{itemize}
         \item[(i)] $l_{d,i} = 0$ for $1 \leq i \leq d$ and $l_{d,0} =(-1)^d$.
         \item[(ii)] $l_{0,i} = 0$ for $2 \leq i \leq d$ and $l_{0,0} = l_{0,1} = 1$.
         \item[(iii)]  $l_{j,d} \geq 1$, $1 \leq j \leq d-1$.
       \end{itemize}
    \end{lemma}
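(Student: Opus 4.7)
The plan is to read $\fL_d$ as a change-of-basis matrix and transcribe the diagonal description of $\Phi_r$ established just before the lemma statement. By definition, the $i$-th column of $\fL_d$ lists the coefficients of $A_{i-1}(t)(1-t)^{d-i}$ in powers of $t$, so after dividing by the common denominator $(1-t)^d$ it expresses the $i$-th vector of $\Bas_d^2$ in the basis $\Bas_d^1$. The discussion preceding Lemma \ref{diagonal} shows that $\Phi_r$ is diagonal in $\Bas_d^2$: the $i=0$ vector (the polynomial $A_{-1}(t)=1$) and the $i=1$ vector ($1/(1-t) = \sum_{n} t^n$) are both fixed by $\Phi_r$, while for $2 \leq i \leq d$ the vector $A_{i-1}(t)/(1-t)^i = \sum_n n^{i-1} t^n$ is rescaled by $r^{i-1}$. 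Since $\fC_{d,r}$ represents $\Phi_r$ in the basis $\Bas_d^1$ by Lemma \ref{diagonal}, conjugation by the change-of-basis matrix gives $\fC_{d,r} = \fL_d \cdot \diag(1,1,r,\ldots,r^{d-1}) \cdot \fL_d^{-1}$, and the columns $\ell_i$ are eigenvectors of $\fC_{d,r}$ for the eigenvalues $r^{i-1}$ (with the convention $r^0 = 1$ for $i=1$).

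For parts (i)--(iii) I would simply read off the definition of $l_{j,i}$ as the coefficient of $t^j$ in $A_{i-1}(t)(1-t)^{d-i}$. For (i), this polynomial has degree at most $(i-1) + (d-i) = d-1$ whenever $i \geq 1$, so $l_{d,i}=0$; at $i=0$ we have $A_{-1}(t)(1-t)^d = (1-t)^d$, whose top coefficient is $(-1)^d$. For (ii), the definition $A_n(t) = \sum_{\sigma \in S_n} t^{\des(\sigma)+1}$ forces $A_n(0)=0$ whenever $n \geq 1$, so $A_{i-1}(t)(1-t)^{d-i}$ vanishes at $t=0$ for $i \geq 2$, while the two special cases $i = 0,1$ collapse to $(1-t)^d$ and $(1-t)^{d-1}$, each with constant term $1$. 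For (iii), the last column $\ell_d$ is the coefficient vector of $A_{d-1}(t)$ itself (since $(1-t)^{d-d} = 1$), and its $t^j$-coefficient is the Eulerian number counting $\sigma \in S_{d-1}$ with $\des(\sigma) = j-1$, which is strictly positive for $1 \leq j \leq d-1$ by the classical fact that every Eulerian number is at least $1$ in the valid index range.

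There is no real obstacle here: once the columns of $\fL_d$ are identified with the basis vectors of $\Bas_d^2$ expressed in $\Bas_d^1$, the factorization and the eigenvector description are immediate from the diagonal form computed just before the lemma, and properties (i)--(iii) are routine inspections of the defining polynomials together with the standard positivity of Eulerian numbers. The only subtlety worth flagging is the degenerate role of the columns indexed by $i=0$ and $i=1$, which both correspond to the eigenvalue $1$ and thereby account for the eigenvalue multiplicities asserted in Lemma \ref{diagonal}.
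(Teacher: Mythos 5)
Your proposal is correct and follows essentially the same route as the paper: identify $\fL_d$ as the change-of-basis matrix from $\Bas_d^2$ to $\Bas_d^1$, conjugate the diagonal form of $\Phi_r$ computed before Lemma \ref{diagonal}, and read the columns off as eigenvectors. The paper dismisses (i)--(iii) as ``immediate consequences of the definitions''; your explicit verification of them (degree count, vanishing of $A_n(0)$ for $n\geq 1$, and positivity of the Eulerian numbers) is exactly what that remark is eliding.
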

    \begin{proof}
       The matrix $\fL_d$ describes the base change from $\Bas_d^2$ to $\Bas_d^1$.
       Since by the arguments preceding Lemma \ref{diagonal} $\diag(1,1,r,\ldots, r^{d-1})$
       is the matrix of $\Phi_r$ with respect to the basis $\Bas_d^2$ and
       since by Theorem \ref{positivelinear} $\fC_{d,r}$ is the matrix of $\Phi_r$
       with respect to the basis $\Bas_d^1$ it follows that $\fC_{d,r} =
       \fL_d \diag(1,1,r,\ldots, r^{d-1}) \fL_d^{-1}.$
       The preimage of the $i$th unit vector under $\fL_d^{-1}$ is $\ell_i$.
       Since the $i$th unit vector is an eigenvector for $r^{i-1}$ of
       $\diag(1,1,r,\ldots, r^{d-1})$
       it follows by the first part of the lemma that $\ell_i$ is an eigenvector
       of $\fC_{d,r}$ for the eigenvalue $r^{i-1}$, $1 \leq i \leq d$.

       Assertions (i) - (iii) are immediate consequences of the definitions.
    \end{proof}

     The preceding lemma  implies the following combinatorial identity for Eulerian
     numbers.
     For $d \geq 1$ and $0 \leq i \leq d$ let $A(d,i) = |\{ \sigma \in S_d \, : \,
     \des(\sigma)=i-1 \}|$.

     \begin{proposition} \label{combidentity}
     Let $d,r \geq 1$. Then
        $$\sum_{j=0}^d C(r-1,d+1,ir-j) A(d,j) = r^{d} A(d,i)$$
     for $i=0,\ldots,d$. In particular
     $$\sum_{j=0}^d C(r-1,d+1,r-j) A(d,j) = r^{d}.$$
     \end{proposition}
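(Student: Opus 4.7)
The plan is to recognize Proposition~\ref{combidentity} as nothing more than the coordinatewise expansion of the eigenvector identity in Lemma~\ref{eigenvecs}, applied not to $\fC_{d,r}$ but to the matrix $\fC_{d+1,r}$ (which has size $(d+2)\times(d+2)$). The Eulerian vector $(A(d,0),\dots,A(d,d))$, after appending a trailing zero, will turn out to be exactly the eigenvector $\ell_{d+1}$ of $\fC_{d+1,r}$ for the top eigenvalue $r^d$.

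First I would instantiate Lemma~\ref{eigenvecs} with $d$ replaced by $d+1$ and with index $i=d+1$. The eigenvalue is then $r^{(d+1)-1}=r^d$, and the corresponding eigenvector $\ell_{d+1}\in\CC^{d+2}$ has entries $l_{j,d+1}$ defined, for $0\le j\le d+1$, by
$$A_{(d+1)-1}(t)\,(1-t)^{(d+1)-(d+1)} \;=\; A_d(t) \;=\; \sum_{j=0}^{d+1} l_{j,d+1}\,t^j.$$
Since $A_d(t)=\sum_{\sigma\in S_d}t^{\des(\sigma)+1}$ has degree exactly $d$, and the coefficient of $t^j$ counts permutations in $S_d$ with $\des(\sigma)=j-1$, we read off $l_{j,d+1}=A(d,j)$ for $0\le j\le d$ and $l_{d+1,d+1}=0$.

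Next I would write down the $i$th coordinate of the eigenvector equation $\fC_{d+1,r}\,\ell_{d+1}=r^d\,\ell_{d+1}$, for $0\le i\le d$:
$$\sum_{j=0}^{d+1} C(r-1,d+1,ir-j)\,l_{j,d+1} \;=\; r^d\,l_{i,d+1}.$$
Dropping the $j=d+1$ summand (whose coefficient $l_{d+1,d+1}$ vanishes) and substituting $l_{j,d+1}=A(d,j)$ yields precisely the first identity of the proposition. The displayed special case is then the specialisation $i=1$: the only element of $S_d$ with zero descents is the identity, so $A(d,1)=1$ and the right-hand side collapses to $r^d$.

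There is no real obstacle here: once one notices that the Eulerian-number column, zero-padded to length $d+2$, realises the eigenvector of $\fC_{d+1,r}$ for the maximal eigenvalue $r^d$, the proposition is a direct bookkeeping of Lemma~\ref{eigenvecs}. The only point requiring minor care is matching the index conventions (the shift from $\fC_{d,r}$ to $\fC_{d+1,r}$, and the fact that the polynomial $A_d(t)$ is recorded in Lemma~\ref{eigenvecs} in a vector of length $d+2$, forcing the extra zero coordinate that makes the $j=d+1$ term drop out).
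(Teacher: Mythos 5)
Your proposal is correct and is precisely the argument the paper intends: the paper merely asserts that Proposition~\ref{combidentity} follows from Lemma~\ref{eigenvecs}, and your write-up (reading off the $i$th coordinate of $\fC_{d+1,r}\,\ell_{d+1}=r^{d}\,\ell_{d+1}$, with $\ell_{d+1}$ the zero-padded Eulerian coefficient vector of $A_d(t)$) is exactly the omitted bookkeeping. The index shifts and the identification $A(d,1)=1$ for the special case are all handled correctly.
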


     Clearly, Proposition \ref{combidentity} asks for a combinatorial proof.

     \section{Proof of Theorem \ref{limittheorem}} \label{prooflimiththeorem}

     Before we come to the proof of Theorem \ref{limittheorem} we need some preparatory lemmas.

     \begin{lemma} \label{cutdegree} Let $f(t) = \frac{h_0+ \cdots + h_st^s}{(1-t)^d}$ for some $d,s \geq 0$.
        Write $f(t) = p(t) + \frac{h_1(t)}{(1-t)^d}$ for polynomials $p(t)$ and $h_1(t)$ where $h_1(t)$ is of degree $\leq d-1$.
        Then for any $r \geq s-d+1$ we have $$f(t)^{\langle r \rangle} = \frac{h^{\langle r \rangle}(t)}{(1-t)^d} = f_1(t)^{\langle r \rangle}$$
        for some polynomial $h^{\langle r \rangle}(t)$ of degree $\leq d$
        and $f_1(t) = \frac{p(0)(1-t)^d +h_1(t)}{(1-t)^d}$.
        Moreover, $h_0 = h^{\langle r \rangle}(0)$ for all $r$.
     \end{lemma}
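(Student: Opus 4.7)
The plan is to exploit the linearity of the operation $f \mapsto f^{\langle r \rangle}$ together with the elementary fact that, once $r$ exceeds the degree of a polynomial $p(t)$, one has $p(t)^{\langle r \rangle} = p(0)$.

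First I would set $g(t) := h_1(t)/(1-t)^d$ so that $f(t) = p(t) + g(t)$ and $f_1(t) = p(0) + g(t)$. The defining identity $h(t) = p(t)(1-t)^d + h_1(t)$ together with $\deg h_1 \leq d-1$ forces $\deg p \leq s - d$, with the convention that $p(t) = 0$ when $s < d$, in which case $f_1 = f$ and the lemma is immediate. The map $\sum a_n t^n \mapsto \sum a_{rn} t^n$ is clearly $\CC$-linear, so $f^{\langle r \rangle} = p^{\langle r \rangle} + g^{\langle r \rangle}$ and $f_1^{\langle r \rangle} = p(0) + g^{\langle r \rangle}$, using that the $r$th Veronese of a constant series is itself.

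Next I would observe that when $r \geq s - d + 1$, the only index $n \geq 0$ with $rn \leq \deg p$ is $n = 0$, so that $p(t)^{\langle r \rangle} = \sum_{n \geq 0} [t^{rn}] p(t) \cdot t^n = p(0)$. Substituting this into the previous expressions yields $f^{\langle r \rangle} = f_1^{\langle r \rangle}$ for every such $r$.

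For the remaining claims I would apply Theorem \ref{positivelinear} to $f_1(t)$, whose numerator has degree at most $d$: this produces the rational presentation $f^{\langle r \rangle} = f_1^{\langle r \rangle} = h^{\langle r \rangle}(t)/(1-t)^d$ with $\deg h^{\langle r \rangle} \leq d$. The identity $h_0 = h^{\langle r \rangle}(0)$ for every $r$ is then obtained by comparing constant terms of power series: both $f(t)$ and $f^{\langle r \rangle}(t)$ share the constant term $a_0$, while $a_0 = h_0$ from the expansion of $h(t)/(1-t)^d$ and $a_0 = h^{\langle r \rangle}(0)$ from the expansion of $h^{\langle r \rangle}(t)/(1-t)^d$. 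I do not foresee any serious obstacle; the only thing to watch is the degenerate case $s < d$, where $p(t) = 0$ trivializes the splitting.
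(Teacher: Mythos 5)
Your argument is correct and follows essentially the same route as the paper: split off the polynomial part $p(t)$ of degree $\leq s-d$, note that for $r \geq s-d+1$ its $r$th Veronese collapses to $p(0)$, apply Theorem \ref{positivelinear} to the remaining rational part to control the degree, and read off $h_0 = h^{\langle r\rangle}(0)$ from the constant term of the power series. No issues.
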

     \begin{proof}
If $s \leq d$ then the result follows immediately from Theorem \ref{positivelinear}  so assume
$s>d$. Then
	$f(t)=
       p_0+p_1t+\cdots + p_{s-d}t^{s-d} + \frac{h_1(t)}{(1-t)^d}$ for a polynomial $h_1(t)$ of degree $\leq d-1$.
     Hence $f(t)^{\langle r \rangle} = p_0 + \frac{h_1(t)^{\langle r \rangle }}{(1-t)^d} = f_1(t)^{\langle r \rangle}$ for $r \geq s-d+1$.
       Since by  Theorem \ref{positivelinear} $h_1(t)^{\langle r\rangle}$ is a polynomial of degree $\leq d$ the assertion follows.
 
       The last equality follows by evaluating the generating series at 
       $t = 0$. 
     \end{proof}

     \begin{remark}
       Let $f(t) = \sum_{n=0}^\infty a_nt^n = \frac{h(t)}{(1-t)^d}$.
       If $d = 0$ then for sufficiently large $r$ we have $f(t)^{\langle r \rangle}=a_{0}$.
       If $d = 1$ then for sufficiently large $r$
       we have $f(t)^{\langle r \rangle}  = \frac{h_0+h_1t}{1-t}$ with $h_{0},h_{1}$ independent of $r$. In particular, the
       numerator polynomial of $f(t)^{\langle r \rangle}$ is real rooted with at most one root which is
       independent of $r$.
     \end{remark}

     We recall the following lemma from \cite{BrentiWelker}.

     \begin{lemma}[Lemma 4.9 \cite{BrentiWelker}] \label{technicallemma}
        Let $(g_n(t))_{n \geq 0}$ be a sequence of real polynomials of degree
        $d-2$, $f(t)$ another real polynomial of degree $d-2$
        and $\rho > 1$, $h_d$ real numbers such that:
        \begin{itemize}
          \item[$\triangleright$] $\lim_{n \rightarrow \infty} g_n(t)/\rho^n =
             0$, where the limit is taken in $\RR^{d-1}$.
          \item[$\triangleright$] All the roots of the polynomial $f(t)$ are strictly
             negative and all coefficients of $f(t)$ are strictly positive.
        \end{itemize}
           Then there are real numbers $\alpha_i$, $1 \leq i \leq d-2$ and
           sequences $(\beta_i^{(n)})_{n \geq 0}$, $1 \leq i \leq d$ of complex numbers such that:
        \begin{itemize}
           \item[(i)] $\beta_i^{(n)}$, $1 \leq i \leq d$, are real for $n$ sufficiently large.
           \item[(ii)] $\lim_{n \rightarrow \infty} \beta_i^{(n)} = \alpha_i$, $1 \leq i \leq d-2$.
           \item[(iii)] $\lim_{n \rightarrow \infty} \beta_{d-1}^{(n)} = 0$.
           \item[(iv)] $\lim_{n \rightarrow \infty} \beta_{d}^{(n)} = - \infty$.
           \item[(v)] $\prod_{i = 0}^{d-1} (t-\beta_i^{(n)}) = h_d + tg_n(t) + \rho^n tf(t) +t^d$.
         \end{itemize}
     \end{lemma}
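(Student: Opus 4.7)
The approach is to view $P_n(t) := h_d + t g_n(t) + \rho^n t f(t) + t^d$ as a perturbation of $\rho^n t f(t)$ and locate the $\beta_i^{(n)}$ via the zeros of $t f(t)$. First I would normalize by $\rho^n$: the rescaled polynomial $Q_n(t) := P_n(t)/\rho^n = tf(t) + \rho^{-n}(h_d + tg_n(t) + t^d)$ converges, by the hypothesis $g_n/\rho^n \to 0$ in $\RR^{d-1}$ and $\rho > 1$, uniformly on every compact subset of $\CC$ to $tf(t)$. The limit $tf(t)$ has degree $d-1$ with a simple root at $0$ and the $d-2$ strictly negative roots $\alpha_1,\ldots,\alpha_{d-2}$ of $f$ (counted with multiplicity). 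By Hurwitz's theorem, for any sufficiently small disks around these points, $P_n$ has, for all large $n$, exactly as many roots (with multiplicity) in each disk as $tf(t)$ does. Labelling accordingly yields $\beta_i^{(n)} \to \alpha_i$ for $1 \leq i \leq d-2$ and $\beta_{d-1}^{(n)} \to 0$, proving (ii) and (iii).

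For the remaining root $\beta_d^{(n)}$, I would use Vieta's formulas. The coefficient of $t^{d-1}$ in $P_n$ equals $\rho^n a_{d-2} + [t^{d-2}] g_n$, where $a_{d-2} > 0$ is the leading coefficient of $f$ (positive by hypothesis) and $[t^{d-2}] g_n = o(\rho^n)$. Hence the sum of all $d$ roots of $P_n$ is $-\rho^n a_{d-2}(1+o(1))$. Since the first $d-1$ roots sum to a bounded quantity by the previous step, $\beta_d^{(n)} = -\rho^n a_{d-2}(1+o(1)) \to -\infty$, which is real and strictly negative for large $n$, giving (iv). Reality of the bounded $\beta_i^{(n)}$ then follows from real-coefficient symmetry of $P_n$: when the limits $0, \alpha_1, \ldots, \alpha_{d-2}$ are pairwise distinct, each lies alone in a small conjugation-closed disk containing exactly one root of $P_n$, which must equal its own conjugate and hence be real. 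Strict negativity of these bounded roots then follows from continuity (for $\alpha_i < 0$) and from the observation that, for large $n$, $P_n(t) > 0$ on $t \geq 0$ since the dominant term $\rho^n tf(t)$ is nonnegative there while the lower-order remainder is controlled.

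The main obstacle is reality when $f$ has a root of multiplicity $k \geq 2$ at some $-a$, so that $k$ roots of $P_n$ cluster near $-a$ and the one-root-per-disk reasoning breaks. A local Newton-polygon analysis with $s = t + a$ and $\tilde f(t) = f(t)/(t+a)^k$ gives the leading balance $s^k \approx -P_n(-a)/\bigl(\rho^n\, a\, \tilde f(-a)\bigr)$, whose solutions generically include non-real $k$-th root-of-unity multiples, so reality requires additional structural input from higher-order terms of the expansion. In the intended application (the proof of Theorem \ref{limittheorem}) the relevant $f$ is essentially an Eulerian polynomial and has simple real roots, so the distinct-limit case already suffices; handling the fully general multiplicity case would require either refining the expansion above or imposing mild genericity hypotheses on $g_n$ and $h_d$ matching the combinatorial origin of the setup.
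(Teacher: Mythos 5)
The paper offers no proof of this statement to compare against: it is imported verbatim as Lemma 4.9 of \cite{BrentiWelker} and used as a black box, so your argument can only be judged on its own merits. On those merits it is the natural proof and it correctly delivers (ii)--(v): the coefficients of $P_n(t)/\rho^n$ converge to those of $tf(t)$, so by Hurwitz (or continuity of roots) exactly $d-1$ roots of $P_n$ converge to the multiset $\{0,\alpha_1,\dots,\alpha_{d-2}\}$ of roots of $tf(t)$; Vieta applied to the coefficient of $t^{d-1}$, namely $\rho^n a_{d-2}+o(\rho^n)$ with $a_{d-2}>0$ the leading coefficient of $f$, sends the remaining root to $-\infty$, and that root must be real since a non-real root would drag its conjugate, of equal modulus, to infinity as well, contradicting the boundedness of the other $d-1$ roots. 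The conjugation-closed-disk argument then gives (i) whenever $0,\alpha_1,\dots,\alpha_{d-2}$ are pairwise distinct.

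The obstacle you isolate for multiple roots is not a weakness of your method but a genuine defect of the statement as transcribed: without a simplicity hypothesis on the roots of $f$, conclusion (i) is simply false. Take $d=4$, $f(t)=(t+1)^2$, $g_n(t)=t^2$, $h_4=-3$; then $P_n(-1+s)=-2-4s-(\rho^n+O(1))s^2+O(s^3)+o(1)$, so the two roots clustering at $-1$ satisfy $s^2\approx -2/\rho^n$ and form a non-real conjugate pair for all large $n$ --- exactly the sign obstruction $s^k\approx C_n/(\rho^n a\tilde f(-a))$ you computed. So the lemma must be read with ``simple (distinct) roots of $f$'' among the hypotheses; since the only $f$ to which it is applied here is $A_{d-1}(t)/t$, which has $d-2$ distinct strictly negative roots, your proof covers everything the paper actually uses, and your honest flagging of the multiplicity issue is a point in your favour rather than a gap you failed to notice. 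One small slip: your closing claim that $P_n(t)>0$ for $t\geq 0$ fails when $h_d<0$ (note $P_n(0)=h_d$), but the lemma asserts nothing about the sign of the bounded roots, so nothing in the required conclusions is affected.
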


     \begin{proof}[Proof of Theorem \ref{limittheorem}]
       By Lemma \ref{cutdegree} we may assume that $h(t)$ is of degree $\leq d$.
       Then Theorem \ref{positivelinear} implies that for $f^{\langle r \rangle}(t) = \frac{h^{\langle r \rangle}(t)}{(1-t)^d}$ the
       polynomial $h^{\langle r \rangle}(t)$ is again of degree $\leq d$.

       Let $\ell_i = (l_{0i}, \ldots, l_{d,i})^{t}$, $0 \leq i \leq d$ be the eigenvectors of $\fC_{d,r}$ as in
       Lemma \ref{eigenvecs}. 
       Let $\ell_i(t) = \frac{\sum_{j=0}^{d} l_{j,i}t^j}{(1-t)^{d}}$, $0 \leq i \leq d$ so $\Bas_d^2 = \{ \ell_0(t),
       \ldots, \ell_d(t) \}$. Let $f(t) =  \alpha_0 \ell_0(t) + \cdots + \alpha_{d} \ell_{d}(t)$ be the expansion of $f(t)$ in the
       basis $\Bas_d^2$. Then by Lemma \ref{eigenvecs} we have 
       $f(t)^{\langle r \rangle} = \alpha_0 \ell_0(t) + \alpha_1 \ell_1(t) + \alpha_2 r \ell_2(t) + \cdots + \alpha_{d} r^{d-1} \ell_{d}(t)$.
       Let
       \[ g_{r}(t) \stackrel{\rm def}{=} t^{d} \, h^{\langle r \rangle }
       \, \left( \frac{1}{t} \right) -h_{d}-t^{d}-r^{d-1} \, \alpha _{d} \, t^{d}
       \, \tilde{\ell}_{d} \left( \frac{1}{t} \right) \]
       where $\tilde{\ell}_{d}(t) \stackrel{\rm def}{=}(1-t)^{d} \, \ell _{d}(t)$.
       Then deg$(g_{r}) \leq d-1$, $g_{r}(0) = 0$ and $\lim_{r \rightarrow \infty} g_r(t) /r^{d-1} = 0$.
       Now $\tilde{\ell}_d(t) = A_{d-1} (t)$ is the $(d-1)$st Eulerian polynomial.
        Thus $\ell_d(t) = t^d \ell_d(1/t)$. Hence  if we set $\tilde{g}_{r}(t)
	\stackrel{\rm def}{=}g_{r}(t)/t$, $\tilde{f}(t) \stackrel{\rm def}{=}
	A_{d-1}(t)/t$
       then $\tilde{f}(t)$ is a polynomial of degree $d-2$, which is real rooted by \cite[p. 292, Ex. 3]{Comtet}
       and has strictly positive coefficients except for the constant, and all the
       hypotheses of Lemma \ref{technicallemma} are satisfied.

       Hence Lemma \ref{technicallemma} becomes applicable and the result follows by passing to the reciprocal polynomials.
     \end{proof}

     It remains to provide  proofs of  Corollaries \ref{algebralimitI} and \ref{algebralimitII}.
   Corollary \ref{algebralimitI} is just a reformulation of Theorem \ref{limittheorem} for
     Hilbert-series. So there is nothing to prove.
     But note that in general passing to Veronese subrings $A^{\langle r \rangle}$ for small $r$ does not
     suffice to guarantee that $h^{\langle r \rangle}(t)$ is real rooted, even in case $h(t)$ is of
     degree $d-1$ and has strictly positive coefficients. 
     For example, $f(t) = \frac{1+t+t^2+t^3+t^4}{(1-t)^5}$ is the
     Hilbert-series of the Stanley-Reisner ring of the boundary complex of the $4$-simplex. 
     However $f^{\langle 2 \rangle} (t) = \frac{1+16t+31t^2+26t^3+6t^4}{(1-t)^5}$ whose numerator polynomial has
     only two real roots.

     \begin{proof}[Proof of \ref{algebralimitII}]
       If $d=1$ then (iii)  follows from (ii) which follows
       immediately from Lemma \ref{cutdegree}.

       Let $d \geq 2$. Assertions (ii) and (iii) follow immediately from Lemma
       \ref{cutdegree} and Corollary \ref{algebralimitI}.
       From the proof of Theorem \ref{limittheorem}  we  recall that
       the numerator polynomial of $\Hilb(A^{\langle r \rangle},t)$ can be written as
       \begin{equation}
       \label{1.6.1}
       h_dt^d + t^{d}g_r\left( \frac{1}{t}\right) + \alpha_{d} r^{d-1} A_{d-1} (t)  + 1
       \end{equation}
       for polynomials $g_r(t)$ and the Eulerian polynomial $A_{d-1}(t)$.
       Moreover we have $\lim_{r \rightarrow \infty} g_r(t)/r^{d-1} = 0$
       and $A_{d-1}(t)$ has strictly positive coefficients except for the constant term which
       is $0$. But this implies
       that for large enough $r$ the polynomial $1/r^{d-1} (t^{d}g_r(1/t)+r^{d-1} A_{d-1}(t))$
       has strictly positive coefficients. This then implies that for large enough
       $r$ the polynomial in (\ref{1.6.1}) has strictly positive coefficients
       except for possibly $h_d$, which proves (i).
     \end{proof}

     \section{Veronese of Stanley-Reisner rings} \label{srrings}
  
     For a simplicial complex $\Delta$ over ground set $\Omega$ and a field $k$ we denote by
     $k[\Delta]$ its Stanley-Reisner ring. Recall that $k[\Delta] = k[x_\omega~:~\omega \in \Omega] / I_\Delta$,
     where $I_\Delta$ is the ideal generated by the monomials $\prod_{\omega \in A} x_\omega$ for $A \not\in
     \Delta$. Assume that $\Delta$ is $(d-1)$-dimensional. We denote
     by $f(\Delta) = (f_{-1}, \ldots, f_{d-1})$ the $f$-vector of $\Delta$; that is $f_i$ is the number of $i$-dimensional faces
     of $\Delta$.
     Then it is well known that :
     \begin{eqnarray} \label{aboveequation} 
        \Hilb(k[\Delta],t) & = &\frac{h_0 + \cdots + h_dt^d}{(1-t)^{d}} \\
        \nonumber                  & = & \frac{\displaystyle{\sum_{i=0}^d f_{i-1} t^i(1-t)^{d-i}}}{(1-t)^{d}}
     \end{eqnarray} 
     The $r$th Veronese of $k[\Delta]$ is a Stanley-Reisner  ring only in extremal cases, but still it has
     turned out to be fruitful and meaningful to look for a simplicial complex $\Delta^{\langle r \rangle }$ such that
     \begin{eqnarray} \label{defineequation} 
        \Hilb(k[\Delta]^{\langle r\rangle },t) & = & \Hilb(k[\Delta^{\langle r \rangle }],t).
     \end{eqnarray}

     In \cite{BrunRoemer} based on earlier ideas by Sturmfels \cite{Sturmfels}, Brun
     and R\"{o}mer consider
     the following situation. Set $S = k[x_1, \ldots, x_n]$ the polynomial ring and let $I_\Delta$ be the
     Stanley-Reisner ideal of a simplicial complex $\Delta$ on ground set $[n]$.
     Then the $r$-th Veronese $(S/I_\Delta)^{\langle r \rangle}$ can be described as a quotient of the polynomial ring
     $S(r)$ in the variables $x_{i_1, \ldots, i_n}$ indexed by numbers $0 \leq i_1, \ldots, i_n$ such that
     $i_1 + \cdots + i_n = r$. If $I(r)$ is the ideal in $S(r)$ such that $(S/I_\Delta)^{\langle r \rangle} = S(r)/I(r)$
     then Brun and R\"omer \cite[Section 6]{BrunRoemer} describe a initial ideal of $I(r)$ which is the Stanley-Reisner
     ideal of a simplicial complex $\Delta(r)$ on
     vertex set $\Omega_r = \{ (i_1,\ldots, i_n) \in \NN^n~:~i_1 + \cdots + i_n = r\}$. By basic facts about
     initial ideals it follows that this $\Delta(r)$ fulfills (\ref{defineequation}). The simplicial complex
     $\Delta(r)$ turns out to be realizable as a subdivision
     of $\Delta$. This subdivision is called $r$-th edgewise subdivision and as outlined in \cite{BrunRoemer} has a
     long history in algebraic topology (see e.g. \cite{Freudenthal}, \cite{Grayson}) and a shorter one  in discrete
     geometry (see e.g.\cite{EdelsbrunnerGrayson}).

     Before we can describe edgewise subdivision we need some
     technical preparations. Consider $\RR^n$ together with its standard unit
     basis vectors $\ee_1, \ldots, \ee_n$. By the obvious identification we can consider
      $\Delta$ as a simplicial
     complex over the ground set $\{ \ee_1, \ldots, \ee_n \} = \Omega_1$.
     Note that for $r \geq 1$ the elements of $\Omega_r$ are the points with integer coordinates in the
     the $r$-th dilation of the simplex spanned by $\Omega_1$. 
     For a vector $\aa = (a_1,\ldots, a_n) \in \RR^n$ its support $\supp(\aa)$ is the set $\{ i ~:~a_i \neq 0 \}$
     of indices of non-zero coordinates. For $i \in [n]$ set $\uu_i = \ee_i + \ee_{i+1} + \cdots + \ee_n$ and for
     $\aa = (a_1,\ldots, a_n) \in \RR^n$ set $\iota(\aa) := \sum_{i=1}^n a_i \uu_i$.
 
     The $r$-th edgewise subdivision of $\Delta$ is the simplicial complex $\Delta (r)$
      on ground set $\Omega_r$
     such that $A \subseteq \Omega_r$ is a simplex of $\Delta (r)$ if and only if
     \begin{itemize}
        \item[(ESD1)] $\displaystyle{\bigcup_{\vv \in A}} \supp(\vv) \in \Delta$.
        \item[(ESD2)] For all $\vv,\vv' \in A$ either $\iota(\vv-\vv') \in \{0,1\}^n$ or $\iota(\vv'-\vv) \in \{0,1\}^n$.
     \end{itemize}

     Now the result by Brun and R\"omer \cite{BrunRoemer} states.

     \begin{proposition}[Proposition 6.4 in \cite{BrunRoemer}] \label{brunroemerprop}
        Let $\Delta$ be a simplicial complex on ground set $[n]$ and  $I(r)$ be such that
        $(S/I_\Delta)^{\langle r \rangle} = S(r)/I(r)$. Then there is a term order for which
        $I_{\Delta(r)}$ is the initial ideal of $I(r)$.
     \end{proposition}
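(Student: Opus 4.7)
My plan is to exhibit an explicit set $G$ of generators of $I(r)$, choose a term order on $S(r)$ so that the initial monomials of the elements of $G$ generate $I_{\Delta(r)}$, and then verify that $G$ is a Gr\"obner basis with respect to this order. Factoring the defining surjection $S(r) \twoheadrightarrow (S/I_\Delta)^{\langle r\rangle}$ through the Veronese subring $S^{\langle r\rangle}$ (the map sends $x_{(i_1,\ldots,i_n)}$ to $x_1^{i_1}\cdots x_n^{i_n}$), one sees that $I(r)$ is the sum of two natural pieces: (a) the toric ideal of $S^{\langle r\rangle}$, which by a classical result of Sturmfels is generated by the quadratic binomials $x_{\vv}x_{\vv'}-x_{\uu}x_{\uu'}$ for $\vv+\vv'=\uu+\uu'$; and (b) the $k$-linear span of the variables $x_{\vv}$ with $\supp(\vv)\notin\Delta$, coming from $I_\Delta$.

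Next I would choose the term order to match the two conditions (ESD1) and (ESD2). A natural candidate is to linearly order $\Omega_r$ via the lexicographic order on $\iota(\vv)\in\NN^n$ and then take the induced lexicographic term order on $S(r)$. The key combinatorial step is to show that whenever $\{\vv,\vv'\}$ violates (ESD2) one can find $\uu,\uu'$ with $\vv+\vv'=\uu+\uu'$ and $x_{\uu}x_{\uu'}\prec x_{\vv}x_{\vv'}$ in the chosen order; such $\uu,\uu'$ are constructed by ``aligning'' the coordinates of $\vv$ and $\vv'$ so that $\iota(\uu-\uu')\in\{0,1\}^n$. Combined with the linear generators from (b), whose leading terms directly realize the (ESD1)-violating vertices as minimal non-faces, this shows that the initial ideal contains every minimal non-face of $\Delta(r)$.

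The remaining task is the reverse inclusion, i.e.\ that the initial ideal contains nothing more than $I_{\Delta(r)}$. This amounts to verifying the Buchberger S-pair criterion for $G$: every S-polynomial of a pair of binomial (or binomial/monomial) generators must reduce to zero modulo $G$. This is the main obstacle I anticipate. The difficulty is that S-pairs can mix the toric Veronese binomials with the Stanley-Reisner monomials, and the reductions must be controlled by an exchange lemma for $\Omega_r$ that mirrors exactly the role played by $\iota$ in the definition of $\Delta(r)$. Granting this combinatorial verification---essentially the heart of the argument in \cite{BrunRoemer} and an adaptation of the toric Gr\"obner basis techniques of \cite{Sturmfels}---Proposition \ref{brunroemerprop} follows.
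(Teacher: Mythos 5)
The paper does not prove this proposition at all: it is imported verbatim as Proposition 6.4 of \cite{BrunRoemer}, so there is no internal argument to measure your attempt against, and a full proof would in any case be out of scope here. Judged on its own merits, your sketch has one concrete error and one essential omission. The error is in your description of the generators of $I(r)$. Writing $I(r)$ as the preimage under $S(r)\twoheadrightarrow S^{\langle r\rangle}$ of the ideal $\bigoplus_{n}(I_\Delta)_{nr}$ of $S^{\langle r\rangle}$, the Stanley--Reisner contribution requires generators coming from the degree-$nr$ monomials of $I_\Delta$ for all $n$, not only from those of degree $r$. If $\Delta$ has a minimal non-face $F$ with $|F|>r$, then a monomial of degree $nr$ with support $F$ lies in $I_\Delta$, yet in any decomposition of its exponent vector into degree-$r$ pieces each piece has support strictly contained in the minimal non-face $F$, hence a face; so none of your linear forms is relevant, and the toric ideal, being prime and binomial, contains no monomials. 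Concretely, for $\Delta=\partial(\mbox{triangle})$ and $r=2$ the monomial $x_{(1,1,0)}x_{(1,0,1)}$ lies in $I(2)$ but not in the ideal generated by your quadratic binomials and linear forms. These missing higher-degree monomial generators are precisely what produces the (ESD1)-type minimal non-faces of $\Delta(r)$ consisting of several vertices each of whose individual supports is a face; your item (b) only accounts for single vertices violating (ESD1).

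The omission is that, even after the generating set is corrected, the entire mathematical content of the proposition is the verification that the chosen set is a Gr\"obner basis whose initial ideal is exactly $I_{\Delta(r)}$ --- the S-pair reductions mixing the toric binomials with the Stanley--Reisner monomials, and the matching of (ESD2) with the toric leading terms. You explicitly defer this step (``granting this combinatorial verification''), so what you have is a plausible plan of attack rather than a proof. If you intend to supply a proof rather than cite \cite{BrunRoemer}, the corrected generating set and the Buchberger (or standard-monomial counting) argument both need to be carried out in full.
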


     In particular, $\Delta^{\langle r\rangle } := \Delta(r)$ satisfies Equation (\ref{defineequation}).
     Clearly, a simplicial complex $\Delta^{\langle r\rangle }$ satisfying
     Equation (\ref{defineequation}) is not uniquely defined. But $\Delta(r)$ appears to be a natural choice.

     First we want to study the enumerative properties of $\Delta(r)$. A partial analysis can also be found in
     \cite{EdelsbrunnerGrayson} but there the final formulas appear as alternating sums which is not
     fully satisfactory from the point of view of Enumerative Combinatorics.
     For enumerative purposes Equation (\ref{aboveequation}) suggests to study a third basis of $\Rat_d$. We denote by
     $\Bas_d^3$ the set of rational functions $\frac{t^i(1-t)^{d-i}}{(1-t)^{d}}$, $0 \leq i \leq d$. Indeed  $\Bas_d^3$
     will be crucial in the proof of the following proposition.

     \begin{proposition}
        Let $\Delta$ be a simplicial complex of dimension $d-1$ with $f$-vector
        $f(\Delta) = (f_{-1} , \ldots, f_{d-1})$ and $r \in \PP$.
        If $\Delta^{\langle r \rangle}$ is a simplicial complex such that
        $\Hilb(k[\Delta]^{\langle r\rangle },t) = \Hilb(k[\Delta^{\langle r\rangle }],t)$ then its
        $f$-vector $f(\Delta^{\langle r\rangle }) = (f_{-1}^{\langle r\rangle } , \ldots, f_{d-1}^{\langle r\rangle })$
        satisfies
        $$f_{i-1}^{\langle r \rangle} = \sum_{\ell = i}^d \sum_{{j_1+ \cdots + j_i = \ell} \atop {j_1, \ldots, j_i \geq 1}} {r-1 \choose j_1-1} {r \choose j_2} \cdots {r \choose j_i} f_{\ell-1} ,$$
	for $0 \leq i \leq d$.
     \end{proposition}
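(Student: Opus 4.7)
By Equation~\eqref{aboveequation} the Hilbert series of the Stanley-Reisner ring of any $(d-1)$-dimensional simplicial complex $\Gamma$ with $f$-vector $(g_{-1},\ldots,g_{d-1})$ is precisely the expansion $\sum_{i=0}^{d} g_{i-1}\,\frac{t^i(1-t)^{d-i}}{(1-t)^d}$ of $\Hilb(k[\Gamma],t)$ in the basis $\Bas_d^3$. In particular, under the hypothesis $\Hilb(k[\Delta]^{\langle r\rangle},t)=\Hilb(k[\Delta^{\langle r\rangle}],t)$, the $f$-vector of $\Delta^{\langle r\rangle}$ is exactly the coordinate vector of $\Phi_r\bigl(\Hilb(k[\Delta],t)\bigr)$ in $\Bas_d^3$. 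By linearity of $\Phi_r$, the whole problem thus reduces to expanding $\Phi_r$ applied to each element $\frac{t^\ell(1-t)^{d-\ell}}{(1-t)^d}$ of $\Bas_d^3$ back into $\Bas_d^3$ and reading off the entries of the resulting transition matrix.

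The plan is to do this uniformly in $\ell$ by means of a single bivariate generating function. Substituting $u:=t/(1-t)$ turns $\Bas_d^3$ into the monomial basis $\{1,u,u^2,\ldots,u^d\}$, so one must compute $\Phi_r(u^\ell)$ as a polynomial in $u$. I would therefore introduce
\[
    F(x,t) \;:=\; \sum_{\ell \geq 0} u^\ell\, x^\ell \;=\; \frac{1-t}{1-(1+x)t},
\]
read off $[t^n]F(x,t) = x(1+x)^{n-1}$ for $n\geq 1$ (and $1$ for $n=0$), and sum the resulting geometric series to obtain the closed form
\[
    \Phi_r(F(x,t)) \;=\; 1 + \sum_{k\geq 1} x(1+x)^{rk-1}\, t^k \;=\; 1 + \frac{x(1+x)^{r-1}\, t}{1-(1+x)^r t}.
\]

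To convert this back into the $u$-basis I would substitute $t = u/(1+u)$ and simplify, yielding
\[
    1 + \frac{x(1+x)^{r-1}\, u}{1-\bigl((1+x)^r-1\bigr)u} \;=\; 1 + \sum_{i\geq 1} x(1+x)^{r-1}\bigl((1+x)^r-1\bigr)^{i-1} u^i.
\]
Extracting the coefficient of $x^\ell u^i$ then amounts to computing $[x^{\ell-1}]\,(1+x)^{r-1}\bigl((1+x)^r-1\bigr)^{i-1}$, which, after expanding $(1+x)^{r-1}=\sum_m \binom{r-1}{m}x^m$ and $(1+x)^r-1=\sum_{k\geq 1}\binom{r}{k}x^k$ multinomially and making the index change $j_1:=m+1$, $j_s:=k_s$ for $2\leq s\leq i$, is precisely $\sum_{j_1+\cdots+j_i=\ell,\,j_s\geq 1}\binom{r-1}{j_1-1}\binom{r}{j_2}\cdots\binom{r}{j_i}$. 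Equating coefficients of $u^i$ in $\Phi_r\bigl(\sum_\ell f_{\ell-1}\, u^\ell\bigr)=\sum_i f_{i-1}^{\langle r\rangle}\, u^i$ then delivers the asserted identity. The main obstacle is purely bookkeeping: one must recognize $\Bas_d^3$ as the natural basis in which the $f$-vector is read off, so that the substitution $u=t/(1-t)$ transforms the $r$-Veronese operation into the clean closed-form manipulation above.
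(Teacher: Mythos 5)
Your proposal is correct and is essentially the paper's own proof: the paper likewise works in the basis $\Bas_d^3$, forms the bivariate generating function $\sum_i v_i(t)^{\langle r\rangle}x^i = 1+\frac{xt(1+x)^{r-1}}{1-t(1+x)^r}$ via the expansion $v_i(t)^{\langle r\rangle}=\sum_j\binom{rj-1}{i-1}t^j$, substitutes $t=u/(1+u)$, and extracts the coefficient of $x^\ell u^i$ from the geometric series $\sum_{i\geq 1}x(1+x)^{r-1}((1+x)^r-1)^{i-1}u^i$. The only cosmetic difference is that you package the sum over basis elements into the closed form $F(x,t)=\frac{1-t}{1-(1+x)t}$ before applying $\Phi_r$, which changes nothing of substance.
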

     \begin{proof}
        We denote by $v_i(t) = \frac{t^i(1-t)^{d-i}}{(1-t)^{d}}$,
        $0 \leq i \leq d$, the elements of $\Bas_d^3$. Then there are numbers $a_{i,\ell}
	^{(r)}$ ($0\leq i, \ell \leq d$) such that
        \begin{eqnarray} \label{define}
          v_i(t)^{\langle r \rangle } & = & \sum_{\ell=0}^d a_{i,\ell}^{(r)} \, v_{\ell } (t).
        \end{eqnarray}
     Hence

        \begin{eqnarray} \label{twovariable} 
           \sum_{i=0}^\infty v_i(t)^{\langle r \rangle } x^i & = &
                  \sum_{i=0}^\infty \sum_{\ell=0}^d a_{i,\ell}^{(r)} \frac{x^it^\ell}{(1-t)^{\ell}} \\
                                                             & = & 
        \label{twovariableI} \sum_{\ell=0}^d \Big( \sum_{i=0}^\infty a_{i,\ell}^{(r)} x^i \Big) \frac{t^\ell}{(1-t)^{\ell}}
        \end{eqnarray} 

        Next we derive a second expansion of the left hand side of  (\ref{twovariable}).
        First we derive an expansion of $v_i(t)^{\langle r \rangle }$ as a
         formal power series.  Clearly, for $i \geq 1$,
        $$ 
          v_i(t)
	  =  \frac{t^i}{(1-t)^{i}}
                                              =  \sum_{j=0}^\infty {j -1\choose i-1} t^j
        $$

        Therefore, for $i \geq 1$,
        $$v_i(t)^{\langle r \rangle } = \sum_{j=0}^\infty {r \, j -1 \choose i-1 } t^j.$$
        For $i = 0$ we have $v_0^{\langle r \rangle }(t) = 1 = v_0(t)$.

        The preceding expansion leads to the following identity. 
        \begin{eqnarray} 
           \nonumber
           \sum_{i=0}^\infty v_i(t)^{\langle r \rangle } x^i & = &
                     1 + \sum_{i=1}^\infty \sum_{j=0}^\infty {r \, j - 1\choose i-1} t^jx^i \\
           \nonumber
                                                             & = & 
                     1 + \sum_{j=0}^\infty \Big( x \sum_{i=1}^\infty  {r \, j -1 \choose i-1} x^{i-1}\Big) t^j \\
           \nonumber
                                                             & = & 
                     1 + \sum_{j=1}^\infty x (1+x)^{rj-1} t^j \\
           \label{twovariableII}
                                                             & = & 
                     1 +t(1+x)^{r-1} \frac{x}{1-t(1+x)^r}
        \end{eqnarray}

     Writing the formulas from (\ref{twovariableI}) and (\ref{twovariableII}) in terms of the variable $u = \frac{t}{1-t}$ or equivalently $t = \frac{u}{1+u}$ and comparing we obtain

        \begin{eqnarray*} 
            1 +u(1+x)^{r-1} \frac{x}{1+u-u(1+x)^r}  & = & \sum_{\ell=0}^d \Big( \sum_{i=0}^\infty a_{i,\ell}^{(r)} x^i \Big) u^\ell
        \end{eqnarray*}   

        Thus by

        \begin{eqnarray*}
           \frac{xu(1+x)^{r-1}}{1+u-u(1+x)^r} & = & \sum_{\ell=0}^\infty x(1+x)^{r-1}   ((1+x)^r - 1)^\ell u^{\ell +1}
        \end{eqnarray*}

        we obtain for $\ell \geq 1$

        \begin{eqnarray*}
            \sum_{i=0}^\infty a_{i,\ell}^{(r)} x^i & = & x(1+x)^{r-1} ((1+x)^r - 1)^{\ell -1}
        \end{eqnarray*}

        and $\sum_{i=0}^{\infty} a_{i,0}^{(r)} \, x^{i}=1$. Hence
        for any $\ell \geq 0$ and $i \geq 0$

        \begin{eqnarray} \label{formula}
            a_{i,\ell}^{(r)} & = & \sum_{{j_1+ \cdots + j_\ell = i} \atop {j_1, \ldots, j_\ell \geq 1}} {r-1 \choose j_1-1}{r \choose j_2} \cdots {r \choose j_\ell}
        \end{eqnarray}
    
        From this we conclude the following equalities which imply the assertion
        \begin{eqnarray*}
          \sum_{i=0}^d f_{i-1}^{\langle r \rangle} \, v_{i}(t) & = & \Hilb(k[\Delta^{\langle r\rangle }],t)  =  \Hilb(k[\Delta]^{\langle r\rangle },t) \\
                                                   & = & \Hilb(k[\Delta],t)^{\langle r \rangle} =  \sum_{i=0}^d f_{i-1} v_i(t)^{\langle r \rangle} \\
                                                   & = & \sum_{i=0}^d f_{i-1} \sum_{\ell=0}^d a_{i,\ell}^{(r)} v_{\ell }(t) \\
                                                   & = & \sum_{\ell=0}^d (\sum_{i=0}^d f_{i-1} a_{i,\ell}^{(r)}) \, v_{\ell } (t) \\
                                                   & = & \sum_{\ell=0}^d \Big(\sum_{i=\ell}^d f_{i-1} \sum_{{j_1+ \cdots + j_\ell = i} \atop {j_1, \ldots, j_\ell \geq 1}} {r-1 \choose j_{1}-1} {r \choose j_2} \cdots {r \choose j_\ell} \Big) \cdot
        \end{eqnarray*}

        The last equality follows from (\ref{formula}) and the fact that $a_{i,\ell}^{(r)} = 0$ for $i < \ell$.
     \end{proof}
 
     As an immediate consequence of Theorem \ref{limittheorem} we also get the following result.
         
     \begin{proposition} 
       For any  $d \geq 2$ there are real numbers $\alpha_1', \ldots, \alpha_{d-2}'$ such that
       for any $(d-1)$-dimensional simplicial complex $\Delta$
       there are $R > 0$ and sequences of complex numbers $(\beta_r^{(i)})_{r \geq 1}$, $1 \leq i \leq d$,
       such that :
       \begin{itemize}
          \item[(i)] $\beta_r^{(i)}$ is real for $r > R$ and $1 \leq i \leq d$ and strictly negative
                     for $r > R$ and $1 \leq i \leq d-1$.
          \item[(ii)] $\beta_r^{(i)} \rightarrow  \alpha ' _i$ for $r \rightarrow \infty$ and $1 \leq i \leq d-2$.
          \item[(iii)] $\beta_r^{(d-1)} \rightarrow -\infty$ for $r \rightarrow \infty$,
          \item[(iv)] $\beta_r^{(d)} \rightarrow -1$ for $r \rightarrow \infty$,
          \item[(v)] $f_{-1}^{\langle r \rangle} + \cdots + f_{d-1}^{\langle r \rangle}t^d =
                       \prod_{i=1}^d (1-\beta_r^{(i)}t)$, for $r>R$.
       \end{itemize}
       In particular, for $r>R$ the $f$-vector of $\Delta^{\langle r \rangle}$ is log-concave and unimodal.
     \end{proposition}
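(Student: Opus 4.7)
The plan is to deduce the proposition from Corollary~\ref{algebralimitI} applied to the Stanley-Reisner ring $k[\Delta]$, transporting the factorization of the numerator polynomial of $\Hilb(k[\Delta]^{\langle r \rangle},t)$ to a factorization of the $f$-polynomial of $\Delta^{\langle r \rangle}$ via the substitution $t \mapsto t/(1+t)$. Concretely, from Equation~(\ref{aboveequation}) applied to $\Delta^{\langle r \rangle}$ one has $h^{\langle r \rangle}(t) = \sum_{i=0}^d f_{i-1}^{\langle r \rangle}\, t^i (1-t)^{d-i}$; substituting $t = s/(1+s)$ and clearing denominators gives the inverse relation
\[
F_r(s) \;:=\; \sum_{i=0}^d f_{i-1}^{\langle r \rangle}\, s^i \;=\; (1+s)^d\, h^{\langle r\rangle}\!\left(\tfrac{s}{1+s}\right).
\]
Hence if $h^{\langle r \rangle}(t) = \prod_{i=1}^d (1 - \gamma_r^{(i)} t)$, then
\[
F_r(s) \;=\; \prod_{i=1}^d \bigl((1+s) - \gamma_r^{(i)}\, s\bigr) \;=\; \prod_{i=1}^d \bigl(1 - (\gamma_r^{(i)} - 1)\, s\bigr).
\]

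Next I would apply Corollary~\ref{algebralimitI} to the standard graded $k$-algebra $k[\Delta]$ of Krull dimension $d$, obtaining strictly negative reals $\alpha_1,\ldots,\alpha_{d-2}$, a threshold $R>0$, and sequences $(\gamma_r^{(i)})_{r \geq 1}$, $1 \leq i \leq d$, satisfying the properties (i)--(v) of that corollary (with the $\gamma_r^{(i)}$ here renaming the $\beta_r^{(i)}$ used there). Setting
\[
\beta_r^{(i)} \;:=\; \gamma_r^{(i)} - 1, \qquad \alpha_i' \;:=\; \alpha_i - 1,
\]
the factorization above yields (v), and the remaining properties follow by termwise subtraction of $1$: reality is preserved, $\gamma_r^{(i)}<0$ for $1\leq i\leq d-1$ forces $\beta_r^{(i)} < -1 < 0$, giving (i); and the limits from (ii), (iii), (iv) of the corollary transform into $\beta_r^{(i)} \to \alpha_i'$, $\beta_r^{(d-1)} \to -\infty$, and $\beta_r^{(d)} \to -1$, respectively.

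For the final claim of log-concavity and unimodality, observe that for $r$ sufficiently large every $\beta_r^{(i)}$ is real and strictly negative (the case $i=d$ being handled because $\beta_r^{(d)}$ lies arbitrarily close to $-1$), so the factorization $F_r(t) = \prod_{i=1}^d (1 - \beta_r^{(i)} t)$ exhibits $F_r(t)$ as a product of linear factors with strictly positive coefficients; by Newton's inequalities the sequence $(f_{-1}^{\langle r \rangle}, \ldots, f_{d-1}^{\langle r \rangle})$ is log-concave, and therefore unimodal. I do not foresee any substantive obstacle: the argument is essentially a bookkeeping exercise, with all of the analytic content already supplied by Theorem~\ref{limittheorem}. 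The one point requiring care is the substitution $t\mapsto t/(1+t)$, which is precisely the change of coordinates between the bases $\Bas_d^1$ and $\Bas_d^3$ discussed in Section~\ref{srrings} and which transports the roots $\gamma_r^{(i)}$ of $h^{\langle r \rangle}(t)$ to the roots $\gamma_r^{(i)}-1$ of $F_r(t)$ by an explicit affine shift.
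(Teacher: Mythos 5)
Your proposal is correct and matches the paper's intent: the paper offers no written proof, stating only that the proposition is "an immediate consequence of Theorem \ref{limittheorem}," and the details you supply — transporting the factorization of $h^{\langle r\rangle}(t)$ through the substitution $t\mapsto s/(1+s)$ coming from Equation (\ref{aboveequation}), which shifts each $\gamma_r^{(i)}$ to $\gamma_r^{(i)}-1$ and in particular sends the limit $0$ of $\gamma_r^{(d)}$ to the limit $-1$ — are exactly the intended bookkeeping. The concluding log-concavity/unimodality argument via real-rootedness and positive linear factors is also the standard one and is sound.
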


     In \cite[Theorem 3.1]{BrentiWelker} it is shown that for a simplicial complex with non-negative $h$-vector the $h$-polynomial of
     its first barycentric subdivision is real rooted. The example preceding the proof of Corollary \ref{algebralimitII}
     shows that such a result is not true for edgewise subdivision. More precisely, it shows that the $h$-polynomial of
     the second edgewise subdivision of the boundary of the $4$-simplex is of degree $4$ but has only two real roots. 

     \section*{Acknowledgment}
       We thank Aldo Conca and Bernd Sturmfels for suggesting the Veronese
       construction as an algebraic analog of barycentric subdivision of simplicial complexes. 
       We also thank Aldo Conca and Tim R\"omer for providing references to edgewise subdivision and its
       relation to Veronese rings.

\end{document}